\documentclass[11pt]{amsart}

\usepackage[top=1in, bottom=1in, left=1in, right=1in]{geometry}

\usepackage{graphicx}
\usepackage[dvipsnames]{xcolor}
\usepackage{amsfonts,amsmath,amssymb,amsthm,bbm}
\usepackage{enumitem}
\usepackage[normalem]{ulem}
\usepackage[hidelinks]{hyperref}
\usepackage{mathtools}
\usepackage{multirow}
\usepackage{comment}
\usepackage{booktabs}
\usepackage{multicol}

\newtheorem{theorem}{Theorem}[section]
\newtheorem{proposition}[theorem]{Proposition}
\newtheorem{lemma}[theorem]{Lemma}
\newtheorem{corollary}[theorem]{Corollary}

\theoremstyle{remark}
\newtheorem{remark}[theorem]{Remark}

\theoremstyle{definition}

\theoremstyle{plain} 
\newcommand{\thistheoremname}{}
\newtheorem*{genericthm*}{\thistheoremname}
\newenvironment{namedthm*}[1]
  {\renewcommand{\thistheoremname}{#1}%
   \begin{genericthm*}}
  {\end{genericthm*}}

\newcommand{\N}{\mathbb N}

\newcommand{\Z}{\mathbb Z}     

\newcommand{\bigo}{\mathcal{O}}

\newcommand{\geo}[1]{\text{Geo}(#1)}
\newcommand{\Var}[1]{\text{Var}(#1)} 
\newcommand{\ber}[1]{\text{Ber}(#1)}
\newcommand{\rj}[2]{\mathcal{E}_{#1}^{#2}}
\newcommand{\lj}[2]{\mathcal{D}_{#1}^{#2}}
\newcommand{\cala}{\mathcal{A}} 
\newcommand{\calb}{\mathcal{B}}
\newcommand{\calc}{\mathcal{C}}

\newcommand{\fl}[1]{\lfloor #1 \rfloor}

\newcommand{\ind}[1]{\mathbbm{1}_{\{#1\}}}

\title[A New Branching Process with Applications for a Self-Interacting Random Walk]{A Branching Process with Geometric Emigration and the ``Have Your Cookie and Eat It" Random Walk}

\author{Patrick Carper}
\address{Patrick Carper, Purdue University, Department of Mathematics, 150 N. University Street,
West Lafayette, IN 47907, USA}
\email{pcarper@purdue.edu}

\date{\today}

\begin{document}

\begin{abstract}
We consider a random walk on the integers that at each site is biased to step right until the first time it has stepped left from that site, thereafter stepping left or right from that site with equal chance, introduced in \cite{Pin10}. In that paper, the author calculated the speed of the walk and the probability that such a walk escapes to infinity for some ranges of \(p\) and conjectured these formulas hold over a wider range. To study this walk, we introduce a branching process featuring geometric emigration and answer some questions concerning its life-periods, transience and recurrence, and the mean of its stationary distribution when positive recurrent. We apply these results to confirm the predictions in \cite{Pin10}. For another application, we calculate the speed and escape probability of a related self-interacting random walk.
\end{abstract}

\maketitle

\section{Introduction}
\emph{Cookie random walks} are a well-studied model of self-interacting random walks with numerous variations, first considered in \cite{BW03}. In an \((M,p)\)-cookie random walk on \(\Z\), \(M\) cookies of strength \(p\in(1/2,1)\) are placed at each site \(x\in\Z\). The walk starts at 0. Upon visiting \(x\), if there is a cookie there, the walker eats it and steps right with probability \(p\) and left with probability \(1-p\). At cookie-less sites (those that have been visited at least \(M\) times) it steps right and left with probability \(1/2\). In \cite{Pin10}, Pinsky proposed the following variation: At each site, there is initially one cookie with strength \(p\in(1/2,1)\). The walk begins at the origin and, at each site, steps right from it with probability \(p\). Only if the walk elects to \emph{step left} from a site does it eat the cookie there. On visits to cookie-less sites it steps left and right with probability \(1/2\). Pinsky christened this the ``have your cookie and eat it" (HYC) random walk and noted that the walk seems to behave like a cookie random walk with \(M=(1-p)^{-1}\) cookies, even though \((1-p)^{-1}\) isn't typically an integer. That is, one can predict the ranges of \(p\) for which the walk is transient and ballistic by naively treating it like a \(((1-p)^{-1},p)\)-cookie random walk and applying results known for that process. This is not entirely a coincidence. The walk can be framed as an excited random walk with a Markovian cookie stack, see the appendix for details. Criteria for which values of \(p\) the walk is recurrent, transient but with zero speed, and ballistic, which were already provided by Pinsky, can also be deduced from the results in \cite{KP17} and \cite{KMP21}, along with limit laws for the location of the walk and functional limit laws for its rescaled path. Here we will study a branching process that provides simple formulas for the speed and escape probability of both this random walk some related processes.

We denote probabilities for the walk started from \(x\) by \(P_x\). Also, let \(T_x=\inf\{n\geq 0:X_n=x\}\). The support of any Geometric random variable appearing is \(\N_0=\{0,1,2,\ldots\}\).

\subsection{Known results and our contributions}
The following results were proved in \cite{Pin10}.

\begin{theorem}[Recurrence/transience/escape probability]\label{thm recurrence}Consider a HYC walk...
 \begin{itemize}
     \item[(i)] if \(p\leq 2/3\), \(P_1(T_0=\infty)=0\) and the walk is recurrent, i.e. with probability 1 it returns to each point infinitely often.
     \item[(ii)] if \(p>2/3,\ P_1(T_0=\infty)>0\) and the walk is transient to the right, i.e. \(\lim_{n\to\infty}X_n=\infty\) a.s. In particular \(P_1(T_0=\infty)=\frac{3p-2}{2p-1}\) for \(p\geq 3/4\) and the author predicted this applies for \(p> 2/3\).
 \end{itemize}
\end{theorem}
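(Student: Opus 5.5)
Starting from \(1\), I would encode the walk up to \(T_0\) by a branching-type process of left excursions, the standard Ray–Knight device for cookie walks. Let \(U_k\) be the number of steps from \(k\) to \(k-1\) before \(T_0\), so \(U_1=1\) on \(\{T_0<\infty\}\). Then \(P_1(T_0=\infty)\) is the probability that this backward process survives, equivalently that the reversed-time process indexed by site never hits \(0\).

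\textbf{Offspring law.} The first task is to describe how \(U_{k+1}\) depends on \(U_k\). At site \(k\) the steps leave in some order. Before the first left step every step is right, with probability \(p\) each, so the number of right steps preceding that first left step is \(\geo{1-p}\) in the \(\N_0\) convention. After the cookie is eaten, each departure is a fair coin. Given \(U_k=m\) left steps from \(k\), the number of right steps is therefore
\[
U_{k+1}\;=\;G_0+\sum_{i=1}^{m-1}G_i,
\]
where \(G_0\sim\geo{1-p}\) and the \(G_i\sim\geo{1/2}\) are independent. A single leftover detail must be fixed by an accounting convention for the final exit, which is immaterial for the transience question. The essential point is that this is a critical Galton–Watson process with geometric offspring, driven by an immigration or drift term whose mean is
\[
E[G_0]-E[G_1]\;=\;\frac{p}{1-p}-1\;=\;\frac{2p-1}{1-p}.
\]
This is exactly the \(((1-p)^{-1},p)\)-cookie heuristic.

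\textbf{Recurrence criterion.} For such processes, namely critical geometric branching with a bounded-moment perturbation, one compares with a squared Bessel process. The drift is \(\delta=(2p-1)/(1-p)\), and the variance per unit mass is \(2\). The standard result, as in Kosygina–Zerner and the excited random walk literature, is that \(0\) is hit a.s. if and only if \(\delta\le 1\). Solving \(\delta\le1\) gives \(p\le 2/3\), which yields part (i) and the threshold in part (ii). Transience to the right, rather than mere positive escape probability, then follows from a \(0\)-\(1\) argument: each site is left behind eventually, and the sites \(x\) play symmetric roles after shifting. Since \(\{X_n\to\infty\}\) has positive probability, it is a.s., because recurrence to the left is ruled out by the right bias.

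\textbf{Escape probability.} I would find a martingale or harmonic function for the process \(U\). Its generator acting on \(f(u)=u\) gives an additive drift. I would then seek \(h\) with \(E[h(U_{k+1})\mid U_k=m]=h(m)\) and \(h(0)=0\), using the generating function of the geometric sums. The hitting probability of \(0\) should be linear in the starting state only asymptotically. The explicit value \(\frac{3p-2}{2p-1}\) should come from computing \(P(\text{hit }0\mid U=1)\) via the conditional mean \(E[\text{total}]\) when \(p\ge3/4\), where second moments are finite and optional stopping applies.

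\textbf{Main obstacle.} I expect the main obstacle to be extending the formula to \(2/3<p<3/4\), where the required moments blow up. There I would try a truncation argument, or an analytic continuation of the generating-function identity in \(p\).
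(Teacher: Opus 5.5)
Your qualitative part is an acceptable sketch and is essentially how that part is known. The paper does not reprove Theorem~\ref{thm recurrence}; it takes it from \cite{Pin10} and notes that it also follows from \cite{KP17}. The upcrossing relation \(U_{k+1}=G_0+\sum_{i=1}^{U_k-1}G_i\) is in fact exact on \(\{T_0<\infty\}\): the last departure from \(k\) before \(T_0\) is a left step, so there is no leftover term. The drift \(\delta=(2p-1)/(1-p)\) then gives the threshold \(2/3\). You should cite \cite{KP17} rather than Kosygina--Zerner, because the number of right steps before the cookie is eaten is unbounded, so this is a Markovian, not a bounded, cookie stack. The \(0\)--\(1\) step is cleanest as follows. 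At time \(T_n\) the environment on \([n,\infty)\) is untouched, so \(P(\text{no return to } n-1 \text{ after } T_n\mid\mathcal{F}_{T_n})=P_1(T_0=\infty)\) for every \(n\), and L\'evy's \(0\)--\(1\) law gives \(X_n\to\infty\) a.s.\ when this is positive. For (i) you still owe a short argument upgrading \(P_1(T_0=\infty)=0\) to recurrence.

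The genuine gap is the explicit value \(\frac{3p-2}{2p-1}\): nothing in your plan produces it. In the forward picture, \(h(m)=P_m(U\text{ hits }0)\) is exactly the unknown. It is bounded and decreases to \(0\), so it is not asymptotically linear. The natural martingale \(U_{k\wedge N}-\delta(k\wedge N)\), with \(N\) the extinction time, says nothing about \(P(N<\infty)\), and there is no ``\(E[\text{total}]\)'' identity to optionally stop.

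The missing idea is to switch to the backward chain \(V_n\) of Section 2, which counts left steps up to \(T_n\). For the walk from \(0\), \(\{T_{-1}>T_n\}=\{\lj{0}{n}=0\}\), so \(P_1(T_0=\infty)=\lim_n P(V_n=0)=\pi(0)\) once \(V\) is positive recurrent, i.e.\ \(\theta=(2-3p)/(1-p)<0\). Since \(P(V_{n+1}=0\mid V_n=v)=P(K>v)=p^{v+1}\), we have \(\pi(0)=E_\pi[p^{V+1}]\). A short computation gives \(E[V_{n+1}-V_n\mid V_n=v]=2-\rho-(1-\rho)p^{v+1}\) with \(\rho=p/(1-p)\). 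If \(E_\pi[V]<\infty\), stationarity forces \(\pi(0)=\frac{2-\rho}{1-\rho}=\frac{3p-2}{2p-1}\).

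This is the first-moment identity you were reaching for, but it lives on the stationary law of the backward chain. It needs \(E_\pi[V]<\infty\), which by Theorem \ref{thm stationary mean} holds only for \(\theta<-1\), i.e.\ \(p>3/4\). So it already fails at \(p=3/4\), which the statement includes, contrary to your remark that moments are finite for \(p\ge 3/4\).

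The paper avoids any moment condition as follows. It iterates the stationary functional equation of Corollary \ref{y functional eq} along \(t_0=p\), \(t_{n+1}=F(t_n)\), and uses the asymptotics of \(\Pi_{n,p}\) from Lemma \ref{lemma prod c asymptotics}. This yields \(Y(p)I(p)(1-a/\theta)=1\), hence \(\pi(0)=\frac{\mu+\nu-\rho}{\nu-\rho}\) for every \(\theta<0\) (Theorem \ref{pi 0}). With \(\mu=\nu=1\) this gives the formula for all \(p>2/3\), covering both \(p=3/4\) and the range you flagged as the main obstacle.
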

\begin{theorem}[Speed]\label{pinsky speed}For the HYC walk, there exists a deterministic speed \(v(p)\) such that \(v(p)=\lim_{n\to\infty}\frac{X_n}{n}\) a.s. and
\begin{itemize}
    \item[(i)] if \(p<3/4\), then \(v(p)=0\).
    \item[(ii)] if \(p>3/4\), then \(v(p)>0\) and furthermore \(v(p)=4p-3\) for \(p>4/5\) and the author predicted this holds for \(p \geq 3/4\).\footnote{The theorem is stated for \(p>10/11\) in \cite{Pin10}, but \cite{Let21} found the proof applies for all \(p>4/5\).}
\end{itemize}
\end{theorem}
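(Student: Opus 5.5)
The plan is to prove the theorem through the usual correspondence between a right-transient walk and a backward branching process built from its left steps. Existence of a deterministic speed and the dichotomy in (i)--(ii) are already available: Pinsky proved them, and they also follow from the Markovian-cookie-stack framework of \cite{KP17,KMP21}. The new content is the identity \(v(p)=4p-3\) on all of \(p\ge 3/4\), and that is where the work goes.

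\textbf{Step 1 (encoding).} Fix \(p>2/3\), so that by Theorem~\ref{thm recurrence} the walk is transient to the right and \(T_n<\infty\) a.s. For \(k<n\), let \(V^{(n)}_k\) be the number of left steps taken from site \(n-k\) before \(T_n\). For \(k\ge1\), the walk must step right from site \(n-k\) exactly \(V^{(n)}_{k-1}+1\) times before \(T_n\), since the \(+1\) is the final crossing. The rule at site \(n-k\) is as follows. Right steps occur with probability \(p\) until the first left step, which happens after \(G\sim\geo{1-p}\) right steps; after that, steps are fair.

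Hence, given \(V_{k-1}=m\), we get \(V_k=0\) on \(\{G\ge m+1\}\). On \(\{G\le m\}\),
\[
V_k=1+\sum_{i=1}^{m+1-G}\xi_i-\xi'\text{-type correction},
\]
which I will rewrite precisely as a sum of \(m+1-G\) i.i.d.\ \(\geo{1/2}\) variables plus one. So \(V\) is a critical Galton--Watson process with \(\geo{1/2}\) offspring, one immigrant per generation, and a geometric emigration of \(\geo{1-p}\) individuals. By construction this is a time-homogeneous Markov chain on \(\N_0\), independent of \(n\).

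\textbf{Step 2 (speed in terms of the stationary mean).} We have
\[
T_n=n+2\sum_{x<n}(\#\text{left steps from }x\text{ before }T_n).
\]
Left steps from negative sites contribute a.s.\ finitely many. So the ergodic theorem for the chain \(V\) gives
\[
\frac{T_n}{n}\to 1+2\mu\quad\text{a.s.},
\]
where \(\mu\) is the mean of the stationary distribution \(\pi\) when \(V\) is positive recurrent, and \(T_n/n\to\infty\) otherwise. Therefore \(v=1/(1+2\mu)\), and the target \(v=4p-3\) becomes
\[
\mu=\frac{2-2p}{4p-3}.
\]
I would establish positive recurrence with finite mean exactly for \(p>3/4\), and null recurrence or infinite mean for \(p\le 3/4\), which gives \(v=0\) there. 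For this I would use a Lamperti-type drift computation. For large \(m\), \(E[V_k-m\mid V_{k-1}=m]\approx 1-\tfrac{p}{1-p}\) and \(\Var{V_k\mid V_{k-1}=m}\approx 2m\). The ratio \(2\cdot\text{drift}/\text{variance coefficient}\) therefore crosses the relevant thresholds at \(p=2/3\) and \(p=3/4\), consistent with \cite{KP17}.

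\textbf{Step 3 (computing \(\mu\); the main obstacle).} Under \(\pi\), taking expectations in the recursion uses the memoryless identity
\[
E[\min(m+1,G)]=\tfrac{p}{1-p}\bigl(1-p^{m+1}\bigr).
\]
The first-moment equation therefore involves the unknown \(E_\pi[p^{V}]\), so it does not close on \(\mu\) alone. This is the step I expect to be hardest. My first attempt is to also write the second-moment stationarity equation \(E_\pi[V_k^2]=E_\pi[V_{k-1}^2]\). Because the offspring law is critical with variance \(2\), the quadratic terms cancel, leaving a second linear relation between \(\mu\), \(E_\pi[p^V]\) and \(E_\pi[Vp^V]\). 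I hope that a clever choice of test function removes the exponential terms; a natural candidate is \(f(m)=m^2+am+b\,(1+cm)p^m\), chosen so that the \(G\)-dependent boundary terms become martingale-like. If that fails, I would instead work with the generating function \(\phi(s)=E_\pi[s^V]\). The geometric structure makes the one-step transform a Möbius-type map in \(s\), giving a functional equation \(\phi(s)=R(s)\,\phi(h(s))+Q(s)\). Iterating it toward the fixed point \(s=1\) of \(h(s)=1/(2-s)\) and differentiating at \(s=1\) should yield \(\mu=(2-2p)/(4p-3)\), provided \(p>3/4\) guarantees convergence. Checking that convergence requirement is exactly where the threshold \(3/4\) should reappear.
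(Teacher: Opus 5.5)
Your Steps 1--2 reproduce the paper's reduction in Sections 2 and 5: the chain \(V\) with \(I=J=1\), \(K\sim\geo{1-p}\), \(\geo{1/2}\) offspring, and \(v=(1+2\bar\pi)^{-1}\). So everything rests on Step 3, which you leave as a hope. That step is the entire new content, since \(v=4p-3\) was already known for \(p>4/5\). The moment/test-function route cannot close. Your first-moment relation contains no trace of your \(\mu\) (the paper's \(\bar\pi\)): with \(\rho=p/(1-p)\), \(E[V_k\mid V_{k-1}=m]=m+2-\rho-(1-\rho)p^{m+1}\), so it only yields \(\pi(0)=pE_\pi[p^V]=\frac{2-\rho}{1-\rho}\), the escape probability. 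The mean first appears in the second-moment relation, together with the new unknown \(E_\pi[Vp^V]\). No test function like \(m^2+am+b(1+cm)p^m\) removes it. By the one-step identity \(Y_{n+1}(t)=B(t)Y_n(p)+C(t)Y_n(F(t))\) of Section 4, \(E[s^{V_k}\mid V_{k-1}=m]=B(s)p^m+C(s)h(s)^m\) with \(h(s)=1/(2-s)\). So the kernel sends \(m^jp^m\) to combinations involving \(m^ih(p)^m\), those to terms in \(m^ih(h(p))^m\), and so on. No finite family \(\{m^js^m\}\) is invariant, and eliminating the exponential terms needs the whole orbit \(p,h(p),h(h(p)),\dots\), which is exactly the generating-function route. (Also, \(E_\pi[V_k^2]=E_\pi[V_{k-1}^2]\) as written presupposes \(E_\pi[V^2]<\infty\). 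This fails for \(3/4<p<4/5\), where \(\theta=2-\rho\in(-2,-1)\) and the stationary tail is of order \(x^{\theta}\); note that \(p=4/5\) is exactly \(\theta=-2\).)

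The generating-function route is the paper's method, but the two ingredients that make it work are missing.

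First, your inhomogeneous term is \(Q(s)=B(s)\phi(p)\), and \(\phi(p)=E_\pi[p^V]\) is unknown. The paper starts the iteration at \(s_0=p\), giving \(\phi(s_{n+1})=\Pi_n^{-1}\phi(p)\bigl(1-\sum_{k=0}^nB(s_k)\Pi_{k-1}\bigr)\) with \(\Pi_n=\prod_{k=0}^nC(s_k)\). This involves only that one unknown, which \(\phi(s_n)\to1\) then determines.

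Second, \(\bar\pi=\lim_n(1-\phi(s_{n+1}))/(1-s_{n+1})\) requires expanding \(1-\phi(s_{n+1})\) to order \(n^{-1}\). That expansion also carries a term of order \(\Pi_n^{-1}\asymp n^{\theta}\), which is negligible exactly when \(\theta<-1\), i.e. \(p>3/4\). This comparison is the precise form of your ``convergence requirement.''

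For HYC everything is explicit, so you could finish without the de Bruijn and Euler--Maclaurin asymptotics the paper needs for general offspring. One has \(1/(1-h_n(s))=n+1/(1-s)\), hence \(1-s_k=1/(k+1+\rho)\) and \(B(s_k)=p(1-\rho)/(k+1)\). Also \(C(s_k)=\frac{(k+\rho)(k+\rho+1)}{(k+1)(k+\rho+2)}\), so \(\Pi_n\) is a ratio of Gamma functions.

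Smaller points:

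\textbf{Drift.} Your recursion has two immigrants (the \(+1\) in \(m+1-G\) and the leading \(1\)), so the drift is \(2-\rho\), not \(1-\rho\). With \(1-\rho\) your thresholds would sit at \(1/2\) and \(2/3\).

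\textbf{Endpoint.} The borderline \(p=3/4\) (\(\theta=-1\)) is not decided by a first-order drift/variance ratio. The simplest fix is the monotone coupling of Proposition~\ref{y monotonicity} together with \(4p-3\downarrow 0\).

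\textbf{Mode of convergence.} Because the array \(\lj{k}{n}\) changes with \(n\), the ergodic theorem gives \(T_n/n\to 1+2\bar\pi\) only in distribution. This identifies the a.s. limit only because its existence is already known.
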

Our first two results confirm Pinsky's predictions that the formulae for the escape probability and the speed of the walk hold over the entire range where they make sense.
 \begin{theorem}[Escape Probability]\label{thm hyc escape}
     If \(p>2/3\), \(P_1(T_0=\infty)=\frac{3p-2}{2p-1}\)
 \end{theorem}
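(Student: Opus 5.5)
We will encode the excursion of the HYC walk started at \(1\) before hitting \(0\) by a branching-like process of left-step counts, and reduce escape to survival of that process. For the walk started at \(1\), let \(U_k\) be the number of left steps from site \(k\) before \(T_0\), for \(k\ge 1\). Then \(\{T_0<\infty\}\) is the event that \(U_1\ge 1\) and \((U_k)\) reaches \(0\) at a finite level, after which no site further right is visited.

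\emph{Step 1: the transition mechanism.} Consider the left steps from site \(k\) occurring between consecutive left steps from \(k+1\). At a fresh site, the walk first steps right a \(\geo{1-p}\) number of times. Here a ``failure'' is a right step and ``success'' is the left step that eats the cookie. After that the site is cookie-less, and each further visit gives independent fair steps. Given \(U_{k+1}=m\), site \(k\) is visited \(m+1\) times from the right side, counting the initial arrival, and we tally the right steps from \(k\) between its left steps. This gives a recursion of branching type with a special first generation: the cookie contributes one extra \(\geo{1-p}\) batch of right excursions, and thereafter the offspring are \(\geo{1/2}\). Reversing the roles gives the standard left-count (forward) branching structure. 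Concretely, let \(V_k\) be the number of right steps from \(k\) before \(T_0\). Then \(V_{k+1}\) given \(V_k\) is a sum of \(\geo{1/2}\) variables plus a cookie correction. Because the cookie persists until the first left step, the correction acts like \emph{geometric emigration}. This is exactly the branching process with geometric emigration the paper introduces.

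\emph{Step 2: the survival probability.} Escape is equivalent to this process never dying, i.e. \(V_k\ge1\) for all \(k\). We will compute \(P(\text{survival})\) by a martingale/generating-function argument. The mean drift is
\[
E[V_{k+1}-V_k\mid V_k=n]=\frac{p}{1-p}-1-\text{(emigration mean)}.
\]
This drift is constant in \(n\), which is the key feature making the process exactly solvable. The process behaves like a critical Galton--Watson process, with variance of order \(n\), plus a constant drift \(\delta\). For such processes the natural scale function is linear. We therefore seek a function \(h(n)\) with \(h(V_k)\) a martingale up to the extinction time, with \(h(0)=0\) and \(h(n)\to1\). Because the walk and the emigration are geometric, we expect \(h\) to be explicit of the form \(h(n)=1-c\,\lambda^n\), or with a Gamma-ratio form. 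Evaluating \(h\) at the law of \(V_1\) (a \(\geo{1-p}\)-type start) should produce \(\frac{3p-2}{2p-1}\). As a consistency check, for \(p\ge 3/4\) this must match Theorem \ref{thm recurrence}(ii), which pins down constants.

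\emph{Step 3: justifying the formula for \(2/3<p<3/4\).} The main obstacle is this range. There the process is null recurrent from the perspective of the speed, since the stationary mean is infinite, so Pinsky's ergodic method fails. The martingale \(h(V_k)\) is still bounded, however, so optional stopping at \(\min(\tau_0,\tau_N)\) with \(N\to\infty\) remains valid. For this we need \(P(\tau_0\wedge\tau_N<\infty)=1\) and \(V_{\tau_N}\) not to overshoot badly, which holds since the increments have exponential tails. Transience for \(p>2/3\), from Theorem \ref{thm recurrence}(ii), gives \(P(V_k\to\infty\mid \text{survival})=1\). The limit \(h(\infty)=1\) then yields
\[
P_1(T_0=\infty)=E[h(V_1)]=\frac{3p-2}{2p-1}.
\]
If no exact martingale exists, the fallback is analytic continuation. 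We would show that both sides are analytic in \(p\) on \((2/3,1)\), the escape probability via the generating-function fixed-point equation for the process, and that they agree on \([3/4,1)\).
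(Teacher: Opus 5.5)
There is a genuine gap. The whole argument rests on a bounded function \(h\) with \(h(0)=0\) and \(h(n)\to 1\) that is harmonic for your excursion chain, and you never construct it. The candidates you offer cannot work, and your two fallbacks do not cover \(2/3<p<3/4\).

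Concretely, with \(U_k\) as you defined it, \(U_1=1\) and \(U_{k+1}=\bigl(K_k+\sum_{i=1}^{U_k-1}\xi_{i,k}\bigr)\ind{U_k\geq 1}\), where \(K_k\sim\geo{1-p}\) and \(\xi_{i,k}\sim\geo{1/2}\); your \(V_k\) is just \(U_{k+1}\). So in this forward chain the cookie acts as geometric \emph{immigration} plus removal of one individual, with drift \(\rho-1\) where \(\rho=p/(1-p)\). It is not the paper's geometric-emigration process, and nothing in the paper computes its survival probability.

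Since \(E[\lambda^{U_{k+1}}\mid U_k=n]=K(\lambda)F(\lambda)^{n-1}\), with \(K(\lambda)=(1-p)(1-p\lambda)^{-1}\) and \(F(\lambda)=(2-\lambda)^{-1}\), the function \(1-c\lambda^n\) is harmonic only if \(F(\lambda)=\lambda\), i.e. \(\lambda=1\). A linear scale function is also wrong: with drift \(\rho-1\) and conditional variance of order \(2n\), the extinction probability from \(n\) decays like \(n^{2-\rho}\), not linearly or exponentially.

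Fitting constants to Pinsky's formula on \(p\geq 3/4\) says nothing about \(2/3<p<3/4\), which is exactly the new content of the theorem. The analytic-continuation fallback presupposes that the escape probability is analytic in \(p\), which is as hard as the theorem itself. Step 3 also conflates two chains. An infinite stationary mean belongs to the paper's backward chain; your absorbed chain has no stationary regime at all.

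The paper avoids survival altogether. It uses the backward chain \(V_n\) of left-step counts up to \(T_n\), which is where geometric emigration really appears. It notes \(\{T_{-1}>T_n\}=\{\lj{0}{n}=0\}\), so \(P(T_{-1}=\infty)=\lim_n P(V_n=0)=\pi(0)\). It then extracts \(\pi(0)\) from the functional equation \(Y(t)=B(t)Y(p)+C(t)Y(F(t))\), iterated along \(t_n=F_n(p)\) and using \(\Pi_n\sim c_pn^{-\theta}\).

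Your route can be completed with the same orbit idea, which is the missing ingredient:
\begin{itemize}
\item Set \(t_k=F_k(0)=k/(k+1)\), \(w_0=1\), \(w_{k+1}=w_kK(t_k)/F(t_k)=w_k\frac{k+2}{k+\gamma}\) with \(\gamma=(1-p)^{-1}\), and \(g(n)=\sum_{k\geq 0}w_kt_k^n\) (with \(0^0=1\)).
\item Then \(E[g(U_{k+1})\mid U_k=n]=\sum_{k\geq 0}w_kK(t_k)F(t_k)^{n-1}=\sum_{k\geq 1}w_kt_k^n=g(n)\) for \(n\geq 1\).
\item \(g\) is finite exactly when \(p>2/3\), and \(g(n)\to 0\) as \(n\to\infty\).
\item Gauss's summation gives \(g(0)=\frac{\gamma-1}{\gamma-3}\) and \(g(0)-g(1)=\frac{\gamma-1}{\gamma-2}\). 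Hence \(h=1-g/g(0)\) satisfies \(h(1)=\frac{\gamma-3}{\gamma-2}=\frac{3p-2}{2p-1}\).
\end{itemize}
Your optional-stopping step then finishes the proof. However, you should justify \(U_k\to\infty\) on survival by the one-step extinction bound \(P(U_{k+1}=0\mid U_k=n)=(1-p)2^{1-n}>0\), not by transience of the walk. Without this explicit construction, the proposal does not prove the theorem.
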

 \begin{theorem}[Speed]\label{thm hyc speed}
     If \(p\geq 3/4\), \(v(p)=4p-3\).
 \end{theorem}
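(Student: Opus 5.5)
The approach is the standard Ray--Knight/branching-process route: express the hitting times \(T_n\) through a Markov chain of edge local times, then identify \(\lim T_n/n\) with \(1+2\) times the mean of that chain's stationary law.

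\textbf{Step 1 (the backward chain).} For \(x<n\), let \(U^n_x\) be the number of steps from \(x\) to \(x-1\) taken before \(T_n\). Then the walk steps from \(x\) to \(x+1\) exactly \(U^n_{x+1}+1\) times before \(T_n\), with \(U^n_n=0\). Read the departures from site \(x\) as a sequence of coin flips. While the cookie is present the walk steps right with probability \(p\). So let \(G\sim\geo{1-p}\) be the number of right steps before the first left step, where \(P(G=k)=p^k(1-p)\). If \(G\ge U^n_{x+1}+1\), then \(U^n_x=0\). Otherwise, \(U^n_x\) is \(1\) plus the number of left steps (failures of a fair coin) seen before the remaining \(U^n_{x+1}+1-G\) right steps.

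This gives a Markov chain \(Z\) with transition
\[
Z_{k+1}=\ind{G_{k+1}\le Z_k}\Bigl(1+\sum_{j=1}^{Z_k+1-G_{k+1}}\xi_{k+1,j}\Bigr),\qquad \xi\sim\geo{1/2}.
\]
This is a critical Galton--Watson process with one immigrant per generation and a geometric emigration \(G\) that can kill the population. Spatial independence of the cookies makes \((U^n_{n-k})_{k\ge0}\) a copy of \(Z\) started at \(0\). Since \(T_n=n+2\sum_{x<n}U^n_x\), and transience gives that only finitely many left excursions below \(0\) matter, we get
\[
\frac{T_n}{n}\to 1+2\,E_\pi[Z]\quad\text{a.s.}
\]
Here \(\pi\) is the stationary law of \(Z\), and the limit follows from the ergodic theorem for positive recurrent \(Z\). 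Combined with the existence of \(v(p)\) from Theorem~\ref{pinsky speed}, this yields \(v(p)=(1+2E_\pi Z)^{-1}\).

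\textbf{Step 2 (compute \(E_\pi Z\)).} The target is \(E_\pi Z=\frac{2-2p}{4p-3}\), since then \(1+2E_\pi Z=\frac1{4p-3}\).

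First use the first-moment drift. Given \(Z=z\), we have \(E[Z'\mid Z=z]=E[(z+2-G)\ind{G\le z}]\). Using \(E[G\wedge(z+1)]=\sum_{k=0}^{z}p^{k+1}\), stationarity gives one linear identity. It involves \(E_\pi Z\) and quantities of the form \(E_\pi[p^{Z}]\) and \(\pi\)-probabilities of the emigration event.

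Then write the stationary equation for the generating function \(\varphi(s)=E_\pi s^{Z}\). Because \(\xi\) and \(G\) are geometric, conditioning on \(Z\) produces terms like \(\varphi\bigl(\tfrac{p}{2-s}\bigr)\), i.e. \(\varphi\) composed with the fair-geometric pgf and scaled by \(p\), together with \(\varphi(p)\)-type boundary terms. Differentiating at \(s=1\), and evaluating at the special point where the composition closes up (a fixed point of \(s\mapsto 1/(2-s)\) scaled by \(p\)), should give a second identity. Solving the two identities then yields \(E_\pi Z\).

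I expect the main obstacle to be this step, namely making the boundary unknowns (like \(\varphi(p)\)) cancel. If the direct functional equation is unwieldy, I would try a martingale/optional-stopping argument on life-periods instead. Run \(Z\) from \(0\) until it returns to \(0\). Compute the expected life-period length and the expected total progeny over a life-period, both via the linear drift plus the geometric memorylessness of \(G\). The ratio of these two expectations is \(E_\pi Z\).

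\textbf{Step 3 (boundary case and positive recurrence).} Finiteness of \(E_\pi Z\) for \(p>3/4\) should follow from the same computation. The drift is asymptotically \(2-\frac{p}{1-p}\cdot\) (a constant), so Lamperti-type criteria give positive recurrence with a finite mean exactly when \(p>3/4\). At \(p=3/4\) the formula gives \(v=0\). There I would show that the drift makes \(Z\) null recurrent, or that its stationary mean is infinite, so that \(T_n/n\to\infty\) and hence \(v(3/4)=0=4p-3\). Alternatively, one can use monotone convergence of \(E_\pi Z\) as \(p\downarrow 3/4\) together with a coupling that is monotone in \(p\).
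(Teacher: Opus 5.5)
Step 1 matches the paper's reduction: your \(Z\) is \(V_n\) with \(I=J=1\), \(K\sim\geo{1-p}\), \(\sigma^2=2\). Your target \(E_\pi Z=\frac{2(1-p)}{4p-3}\) is also correct. The gap is in Step 2, which carries the whole theorem, and your mechanism there does not produce \(E_\pi Z\).

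\textbf{Why your two identities collapse to one.} The first-moment identity does not contain \(E_\pi Z\). By memorylessness, \(E[Z'\mid Z=z]=z+(2-\rho)+(\rho-1)p^{z+1}\) with \(\rho=\frac{p}{1-p}\). Criticality cancels the \(z\), so stationarity only gives \(E_\pi[p^{Z+1}]=\frac{\rho-2}{\rho-1}\), i.e. \(\pi(0)=\frac{3p-2}{2p-1}\), which is the escape probability. Differentiating the generating-function equation once at \(s=1\) returns exactly this identity, because \(\varphi'(1)\) enters both sides with coefficient \(C(1)F'(1)=1\). Also, the equation couples \(\varphi(s)\) with \(\varphi(p)\) and \(\varphi(1/(2-s))\), not \(\varphi(p/(2-s))\). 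The map \(s\mapsto 1/(2-s)\) has only the parabolic fixed point \(s=1\), so there is no second point where the composition closes up.

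\textbf{Second order works only for \(p>4/5\).} To reach \(E_\pi Z\) you must go to second order: \(E[Z'^2\mid Z=z]-z^2=2(3-\rho)z+c_1-c_0p^{z+1}\), with \(c_0=(2\rho-1)(\rho-1)\) and \(c_1=5-2\rho+c_0\). Balancing this under \(\pi\), together with the first identity, does give \(E_\pi Z=\frac{2}{\rho-3}=\frac{2(1-p)}{4p-3}\). But that balance presupposes \(E_\pi Z^2<\infty\). The stationary law should have a polynomial tail \(\pi(Z>x)\approx x^{\theta}\) with \(\theta=\frac{2-3p}{1-p}\) (the standard heavy tail for such processes, not proved in the paper). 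This is the same tail that makes \(\bar\pi=\infty\) for \(\theta\ge -1\). So the second moment is finite only for \(\theta<-2\), i.e. \(p>4/5\). That is precisely the range where the speed formula was already known. For \(3/4<p\le 4/5\) you have \(\varphi''(1)=\infty\), and your proposal has no device for this range, which is the actual content of the theorem.

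\textbf{How the paper gets around this.} It never uses second derivatives at \(1\). It iterates \(Y(t)=B(t)Y(p)+C(t)Y(F(t))\) along \(t_0=p\), \(t_{n+1}=F(t_n)\uparrow 1\), which expresses \(Y(t_{n+1})\) exactly through the products \(\Pi_n\). Second-order asymptotics of \(t_n\) and \(\Pi_n\) then pin down \(1-Y(t_{n+1})\) to order \(n^{-1}\). Since \(\bar\pi=\lim_n(1-Y(t_{n+1}))/(1-t_{n+1})\) needs only the first derivative at \(1\), the argument is valid for every \(\theta<-1\).

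\textbf{Your life-period fallback, as stated, is also insufficient.} The linear drift only gives the mean cycle length, through \(\pi(0)\). The expected area \(E\sum_{k<\tau}Z_k\) requires the test function \(z^2\) and optional stopping over a cycle. The difficulty then sits in the boundary term \(E[Z_n^2;\tau>n]\), which must be shown to vanish even though \(E_\pi Z^2=\infty\). This looks repairable, though you would need to verify the details:
\begin{itemize}
\item For \(\rho>3\), the \(z^2\) identity shows this term converges and stays bounded.
\item The cubic drift gives \(E[Z_n^3;\tau>n]=\bigo(n)\).
\item Because \(\sum_n E[Z_n;\tau>n]<\infty\), some subsequence has \(nE[Z_n;\tau>n]\to 0\).
\item Along that subsequence, the Cauchy--Schwarz bound \(E[Z_n^2;\tau>n]\le (E[Z_n^3;\tau>n]\,E[Z_n;\tau>n])^{1/2}\) forces the limit to be \(0\).
\end{itemize}

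\textbf{The boundary case \(p=3/4\).} Here the chain is positive recurrent (\(\rho=3>2\)), not null recurrent. What you need is \(E_\pi Z=\infty\), or, as in the paper, monotonicity of \(v\) in \(p\) via the emigration coupling.
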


 \subsection{Overview of the paper}
 In Section 2, we will discuss how the number of times the walk has stepped left and right from each nonnegative site by the first time it hits site \(n\) can be viewed as a branching process. We observe that this branching process has some novel features that prevent it from satisfying the assumptions of models that have been previously treated in the branching process literature, and so in Section 3 we introduce and study a more general model having these features. In Section 3.1, we obtain tail asymptotics on the \emph{life-period} of this process---the amount of time it spends away from zero---which, in turn, determine its recurrence and transience. In Section 3.2, we calculate the mean of the stationary distribution as well as the weight it assigns to 0 in the positive recurrent regime. Section 4 is dedicated to the proofs of our main results for the branching process. In Section 5, we use these results to compute the speed and escape probability of a HYC walk, proving Theorems \ref{thm hyc escape} and \ref{thm hyc speed}. Finally, in Section 6, we introduce a related self-interacting random walk whose speed and escape probability are easily calculated as a further application of our formulae.

\section{Reduction to branching processes}
In this section we will construct a branching processes that counts the number of times the walk has stepped left from each site by the first time the walk hits site \(n\). The idea is well-known: \cite{KKS75} applies it to find limit laws for a random walk in a random environment, \cite{BS08-2} uses it to find limit laws for transient but with zero speed \((M,p)\)-cookie random walks, and \cite{KMP21} relies on it to find functional limit laws for the rescaled path of a cookie random with a Markovian cookie stack. First, by Theorem \ref{thm recurrence}, for any \(p> 1/2\), we a.s. have \(T_n<\infty\) for all \(n\geq 0\). Let
\begin{align*}
    \rj{x}{n}=\sum_{k=0}^{T_n-1}\ind{X_k=x,X_{k+1}=x+1}\quad\text{and}\quad\lj{x}{n}=\sum_{k=0}^{T_n-1}\ind{X_k=x,X_{k+1}=x-1}
\end{align*}
which count, respectively, the number of right steps and left steps from site \(x\) up to time \(T_n\). We can frame the sequence \(\{\lj{n}{n},\lj{n-1}{n},\ldots,\lj{1}{n},\lj{0}{n}\}\) as a branching process with migration. Set \(V_0=0\) and for \(n\geq 0\)
\begin{align*}
    V_{n+1}=\Bigl(1+\sum_{m=1}^{V_n-K_n+1}X_{m,n}\Bigr)\ind{K_n\leq V_n}.
\end{align*}
where \(\{X_{m,n}\}_{m\geq 1,n\geq 0}\) are independent \(\geo{1/2}\) random variables and \(K_n\) are independent \(\geo{1-p}\) random variables that are independent of the \(X_{m,n}\). We claim that \[\{\lj{n}{n},\ldots,\lj{0}{n}\}\overset{d}{=}\{V_0,\ldots,V_n\}.\] Of course \(\lj{n}{n}=0=V_0\). The moment the walk hits \(n\), the walk has stepped right from \(x\) to \(x+1\) \(\rj{x}{n}\) times. The \(\rj{x}{n}\)-th step from \(x\) is to the right. The number of right steps from \(x\) before the first left step is the number of failures before the first success in a Bernoulli sequence with success probability \(1-p\) if this success occurs before the \(\rj{x}{n}\)-th right step from \(x\) and zero otherwise. Call the number of trials before this success occurs \(K_n\). If \(K_n\leq \rj{x}{n}-1\), there is one left jump and then the site becomes neutral, so that the number of left jumps from \(x\) to \(x-1\) between each of the \(\rj{x}{n}-K_n\) additional right jumps from \(x\) to \(x+1\) is counted by the number of failures before the first success in a Bernoulli sequence with success probability \(1/2\), and are thus each \(\geo{1/2}\). So conditioned on \(\rj{x}{n}\),
\[\lj{x}{n}\overset{d}{=}\Bigl(1+\sum_{m=1}^{\rj{x}{n}-K_n}X_{m,n}\Bigr)\ind{K_n\leq \rj{x}{n}-1}.\]
Now for \(0\leq x\leq n-1\), \(\rj{x}{n}=\lj{x+1}{n}+1\), since for the walk to be at site \(n\), it must have crossed from \(x\) to \(x+1\) one more time than it crossed from \(x+1\) to \(x\). The claim follows from this substitution.
 
\section{A branching process with two types of immigration and geometric emigration}
Because the branching process introduced in the previous section features geometric emigration, it fails to fit into the classic framework of branching processes with immigration and bounded emigration in \cite{YY95}. It does fit into the newer framework of \textit{multiple controlled branching process}, see \cite[Definition 3.2]{GdPY18} and can also be obtained by summing the components of a controlled \textit{multi-type} branching process (CMBP), a branching process living in \(\N_0^d\) whose transitions are governed by certain control functions, see \cite{BGMCdP24} for a complete definition. Results concerning the extinction time for some CMBP were established in the univariate case \(d=1\) in \cite{GMdP05}; however, a theorem that provides extinction times of CMBP in \(\N_0^d\), \(d\geq 2\), appears absent from the literature. Our branching process \(V_n\) \emph{is} a special case of the \emph{branching-like processes} studied in \cite{KP17}. A version of our Theorem \ref{thm tau} specialized to the process \(V_n\) can be partially recovered from \cite[Theorem 2.7]{KP17}.

We will confine ourselves to the following process that generalizes \(V_n\):
\begin{equation}\label{def yn}
    Y_{n+1}=\Bigl(J_n+\sum_{m=1}^{Y_n+I_n-K_n}X_{m,n}\Bigr)\ind{K_n<Y_n+I_n}
\end{equation}
where \(Y_0\) is given, \(I_n,J_n,K_n\in \N_0\) are random variables that represent immigrants arriving before reproduction, immigrants arriving after reproduction, and emigrants leaving before reproduction. We assume \(I_n,J_n\), and \(K_n\) are iid copies of the random variables \(I,J\), and \(K\), respectively. Suppose the random variables \(X_{m,n}\in \N_0\) are iid copies of the random variable \(X\), which is independent of \(I,J\), and \(K\). Finally, \(I_n,J_n,K_n\), and \(X_{m,n}\) are all jointly independent for all \(n\). Unlike the process \(V_n\), \(Y_n\) cannot be written as one of the branching-like process in \cite{KP17} for generic random variables \(I, J,\) and \(X\). Finally, throughout we will assume\footnote{We make the assumption \(P(J>0)=1\) for convenience. It implies the only time our branching process hits zero is when it is wiped out by emigration. We believe the methods here together with some additional arguments can be used to prove analogous results for the case when \(P(J=0)>0\), but this introduces another term in Lemma \ref{zn recurrence} and Corollary \ref{y functional eq}, and we anticipate it will complicate the proofs of our main results.} \(P(J>0)=1\) and \(K\sim \geo{1-p}\) for \(0<p<1\), i.e. \(P(K=k)=(1-p)p^k\) for all \(k\geq 0\). For most results, we will need one of the two following sets of additional assumptions:
\begin{multicols}{2}
  {\bfseries Assumption A.}
    \begin{itemize}
        \item  \(E[J^2],E[I^2]<\infty\)
        \item \(E[X]=1\) and  \(E[X^3]<\infty\)
    \end{itemize}
    \columnbreak
   \hspace{0.5 cm}{\bfseries Assumption B.}
    \begin{itemize}
    \item \(E[J^3],E[I^3]<\infty\)
    \item \(E[X]=1\) and  \(E[X^4]<\infty\)
    \end{itemize}   
\end{multicols}
\vspace{-0.5 cm}
\[\text{Define also}\quad\quad E[I]=\mu,\ E[J]=\nu,\ E[K]=p(1-p)^{-1}=\rho,\ 0<\Var{X}=\sigma^2,\ E[X^3]=\alpha.\] 
We will need to define many probability generating functions throughout. To remind the reader which random variables they determine, we set \(Y_n(t)=E[t^{Y_n}],\ J(t)=E[t^J],\ I(t)=E[t^I],\) and  \(K(t)=E[t^K]=(1-p)(1-pt)^{-1}.\) Adhering to convention, take the offspring probability generating function to be \(F(t)=E[t^X]\).
\subsection{Results concerning recurrence, transience, and life-periods}
Note that if \(P(I=0)=1\), 0 is an absorbing state for the Markov chain \(Y_n\). Define
\(\theta=\frac{2}{\sigma^2}(\mu+\nu-\rho).\) We have the following theorem:
\begin{theorem}\label{thm markov}
    Assume \(P(I=0)<1\), the Markov chain \(Y_n\) is irreducible, and Assumption A holds. Then \(Y_n\) can be classified as
    \begin{itemize}
    \item Transient, if \(\theta>1\).
    \item Null-recurrent, if \(0<\theta\leq 1\) and if \(\theta=0\) and Assumption B holds.
    \item Positive recurrent, if \(\theta<0\).
\end{itemize}
\end{theorem}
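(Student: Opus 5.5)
The plan is to treat \(Y_n\) as a Markov chain on \(\N_0\) with asymptotically zero drift after a square-root change of scale, and to apply Lamperti-type criteria, in the form of Foster--Lyapunov functions. The first step is to compute the conditional moments of the increments given \(Y_n=y\). Write \(\Delta_n=Y_{n+1}-Y_n\) and \(\delta=\mu+\nu-\rho\).

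On the event \(\{K_n<y+I_n\}\), \(Y_{n+1}=J_n+\sum_{m=1}^{y+I_n-K_n}X_{m,n}\). Since \(E[X]=1\), this gives \(E[Y_{n+1}\mid Y_n=y]=\nu+y+\mu-\rho+O(y^{c}p^{y})\). The error comes from the complementary event \(\{K_n\ge y+I_n\}\), whose probability is at most \(p^{y}\). Hence
\[E[\Delta_n\mid Y_n=y]=\delta+o(y^{-1}).\]
In the same way, using \(E[X^3]<\infty\) and \(E[I^2],E[J^2]<\infty\), the conditional variance is
\[\Var{\Delta_n\mid Y_n=y}=\sigma^2 y+O(1),\]
and the third central moment is \(O(y)\). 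These follow from the standard formulas for random sums, together with the geometric tail of \(K\), which makes every correction from \(K\) of order \(O(1)\) or exponentially small.

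Next, pass to \(W_n=\sqrt{Y_n}\) and Taylor expand \(\sqrt{y+\Delta}\). On \(\{|\Delta|\le y/2\}\) this gives
\[E[W_{n+1}-W_n\mid Y_n=y]=\frac{\delta-\sigma^2/4}{2\sqrt y}+O(y^{-1}),\qquad E[(W_{n+1}-W_n)^2\mid Y_n=y]=\frac{\sigma^2}{4}+O(y^{-1/2}).\]
The complementary event \(\{|\Delta|>y/2\}\) is controlled by Chebyshev/Markov with the third moment. Lamperti's criteria compare \(2w\,m_1(w)\) with \(m_2(w)\), where \(m_1,m_2\) are the first two increment moments at \(W=w\). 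Here \(2w\,m_1(w)\to\delta-\sigma^2/4\) and \(m_2(w)\to\sigma^2/4\). This gives:
\begin{itemize}
\item transience iff \(\delta-\sigma^2/4>\sigma^2/4\), i.e.\ \(\theta>1\);
\item positive recurrence iff \(\delta-\sigma^2/4<-\sigma^2/4\), i.e.\ \(\theta<0\);
\item null recurrence in between.
\end{itemize}

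Concretely, I would work directly with Lyapunov functions of \(Y\). For transience, use \(f(y)=y^{-\beta}\) with \(0<\beta<\theta-1\) and show it is a supermartingale off a finite set. For positive recurrence, use Foster's criterion with \(f(y)=y\): the drift is \(\delta<0\) outside a finite set, and the jumps have bounded mean near the origin. For recurrence when \(0<\theta\le1\), use \(f(y)=\log y\), or \(\log\log y\) at \(\theta=1\). The expansion
\[E[\log(Y_{n+1})-\log y\mid Y_n=y]=\frac{\delta-\sigma^2/2}{y}+O(y^{-3/2})\]
is \(\le 0\) for \(\theta<1\). At \(\theta=1\) one needs the second-order term, and Assumption A is used to bound the remainder by \(o(1/(y\log y))\). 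Non-positive-recurrence for \(0\le\theta\le 1\) would follow from the Lamperti criterion, or from a submartingale argument with \(f(y)=y^{\gamma}\) for suitable \(\gamma\). Irreducibility and \(P(I=0)<1\) ensure that \(0\) is not absorbing, so that the recurrence or transience of the finite set transfers to the whole chain.

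I expect the main obstacle to be the boundary cases. The first is \(\theta=1\), where the iterated-log Lyapunov function needs error terms of order \(o(1/(y\log y))\). The second, and more delicate, is \(\theta=0\), where the drift \(\delta\) is exactly zero and Foster's criterion fails. There, showing null recurrence (non-positive recurrence) needs a finer estimate. I would try the function \(y^{\gamma}\) with small \(\gamma>0\) to show that \(E[\text{return time}]=\infty\); the \(O(y^{-1})\) remainders in the drift must then be controlled uniformly, which requires the fourth moment of \(X\) and the third moments of \(I,J\) (Assumption B). The other delicate point is making the Taylor remainders rigorous when \(\Delta\) is large, which I would handle by truncation plus the moment bounds.
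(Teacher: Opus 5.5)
Your route is genuinely different from the paper's. The paper uses no Lyapunov function. It derives Theorem \ref{thm markov} as a corollary of Theorem \ref{thm tau}: exact tail asymptotics for the life-period, obtained from the pgf recursion of Lemma \ref{zn recurrence}, the estimate \(\Pi_{n,p}\sim c_pn^{-\theta}\), and Tauberian theorems (with a monotone-density step at \(\theta=0\)). That yields explicit constants and \(E[\tau]\). Your approach only classifies, but it needs far less machinery, and most of it works.

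\begin{itemize}
\item Foster's criterion with \(f(y)=y\) settles \(\theta<0\) at once. By memorylessness of \(K\), the drift is exactly \(\delta+(\rho-\nu)p^yI(p)\).
\item \((1+y)^{-\beta}\) gives transience. Use it rather than \(y^{-\beta}\), because \(0\) is reached with probability \(p^yI(p)>0\).
\item \(\log y\) and \(\log\log y\) give recurrence. At \(\theta=1\) the drift of \(\log\log y\) is \(-\sigma^2/(2y\log^2y)\), so remainders must be \(o(1/(y\log^2 y))\), not \(o(1/(y\log y))\). They are \(O(y^{-3/2})\), so this is fine.
\item Under Assumption A, \(E[J^3]\) may be infinite. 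Apply third-moment bounds only to \(S_N-N\) and truncate \(I,J,K\).
\end{itemize}

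The gap is the non-positive-recurrence step, and it is real at \(\theta=0\). There, \(y^\gamma\) with small \(\gamma>0\) cannot show that the expected return time is infinite. The drift of \(Y\) is only \(\mu p^yI(p)\), and \(x\mapsto x^\gamma\) is concave, so \(E[Y_1^\gamma\mid Y_0=y]-y^\gamma\approx-\tfrac12\gamma(1-\gamma)\sigma^2y^{\gamma-1}<0\). Thus \(Y^\gamma\) is a supermartingale, which bounds return times from above, not below.

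More generally, \(y^\gamma\) has nonnegative drift only for \(\gamma\ge 1-\theta\). The standard non-ergodicity criterion needs \(\sup_yE[|f(Y_1)-f(y)|\mid Y_0=y]<\infty\), which forces \(\gamma\le 1/2\), since \(E|\Delta|\asymp\sqrt y\). So this device covers essentially only \(\theta>1/2\). Your limiting Lamperti comparison is also inconclusive at \(\theta=0\), because \(2wm_1(w)+m_2(w)\to 0\) there. You would need the log-refined null-recurrence criterion together with an explicit error rate.

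A simple fix within your framework is to use \(f(y)=y\) itself.
\begin{itemize}
\item Its drift \(\delta(1-p^yI(p))+\mu p^yI(p)\) is \(\ge 0\) for every \(\theta\ge0\).
\item Using only second moments, \(E[Y_1^2-y^2\mid Y_0=y]\le C(1+y)\).
\item Let \(T\) be the hitting time of \(\{0,\dots,M\}\) from \(y>M\), and suppose \(E_y[T]<\infty\). Then \(nP_y(T>n)\to0\).
\item A short bootstrap gives \(E_y[Y_{n\wedge T}^2]=O(n)\). Cauchy--Schwarz then gives \(E_y[Y_n;T>n]\to 0\).
\item This contradicts \(y\le E_y[Y_{n\wedge T}]\le M+E_y[Y_n;T>n]\).
\end{itemize}
This handles all \(0\le\theta\le 1\) at once, without Assumption B.
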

Related are results about the \emph{life-period} \(\tau\) of the process \(Y_n\) with \(Y_0=0\). Let \(T_+=\inf_{n\geq 1}\{Y_n>0\}\) and \(T_0=\inf_{n\geq 1}\{Y_n=0\}\). Then define \(\tau=\inf_{n\geq 1}\{Y_{n+T_+}=0\}\).
\begin{theorem}\label{thm tau}
    Assume \(P(I=0)<1\) and Assumption A. There exists positive, explicit constants \(c_1,c_2,\) and \(c_3\) such that
    \begin{itemize}
    \item \(P(\tau>n)\sim c_1\), if \(\theta>1\).
    \item \(P(\tau>n)\sim c_2/\log(n)\), if \(\theta=1\).
    \item \(P(\tau>n)\sim c_3/n^{1-\theta}\), if \(0<\theta<1\).
    \end{itemize}
    If \(\theta=0\) and Assumption B holds, then \(P(\tau>n)\sim \frac{2}{\sigma^2}(E[Y_{T_+}]+\mu)n^{-1}\). Lastly, \(E[\tau]=1-\frac{2}{\theta \sigma^2}(E[Y_{T_+}]+\mu)\) when \(\theta<0\).
\end{theorem}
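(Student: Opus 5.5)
We will prove Theorem \ref{thm tau} with the generating-function method used for branching processes with migration (in the spirit of \cite{YY95}): derive a functional equation for the generating function of the process killed at its first return to zero, then read off tail asymptotics by singularity analysis near \(t=1\) together with a Tauberian theorem. Let \(Z_n\) be the process started from \(Y_{T_+}\) and run until it first hits \(0\). Set \(G_n(t)=E[t^{Z_n};\tau>n]\), so that \(P(\tau>n)=G_n(1)\).

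\textbf{Step 1: the recursion.} Conditioning on \(Y_n=y>0\), \(I_n\) and \(K_n\), the next state is \(J+\sum_{m=1}^{y+I-K}X_m\) on \(\{K<y+I\}\) and \(0\) otherwise. Because \(K\sim\geo{1-p}\), we have \(P(K=k)=(1-p)p^k\), so summing over \(k<y+i\) gives a finite geometric sum:
\[
E\bigl[F(t)^{\,y+I-K}\ind{K<y+I}\bigr]=E\Bigl[(1-p)\sum_{k=0}^{y+I-1}p^kF(t)^{y+I-k}\Bigr]=\frac{(1-p)F(t)}{F(t)-p}\Bigl(E[F(t)^{I}]\,F(t)^{y}-E[p^{I}]\,p^{y}\Bigr).
\]
This is the key place where geometric emigration is used: it turns the emigration into a linear combination of \(F(t)^y\) and \(p^y\). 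The resulting identity (presumably the forthcoming Lemma \ref{zn recurrence}) reads
\[
G_{n+1}(t)=J(t)\,\frac{(1-p)F(t)}{F(t)-p}\Bigl(I(F(t))\,G_n(F(t))-I(p)\,G_n(p)\Bigr).
\]
Summing over \(n\) with a weight \(s^n\), or directly summing over \(n\), gives a functional equation for \(\mathcal G(t)=\sum_n G_n(t)\) (Corollary \ref{y functional eq}), with an inhomogeneous term involving the unknown constant \(\sum_n G_n(p)\). The quantity \(E[\tau]\) is \(\mathcal G(1)\).

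\textbf{Step 2: the transient and positive recurrent cases.} For \(\theta<0\), evaluate the functional equation at \(t=1\) and differentiate once. Since \(F(1)=1\), \(F'(1)=1\), and \(\frac{d}{dt}\frac{(1-p)F}{F-p}\big|_{t=1}=-\rho\), the first-order terms cancel. Expanding to second order introduces \(\sigma^2\) and produces the linear relation \(\theta\sigma^2\mathcal G(1)/2=\dots\), which yields \(E[\tau]=1-\frac{2}{\theta\sigma^2}(E[Y_{T_+}]+\mu)\). The initial term \(G_0=E[t^{Y_{T_+}}]\) supplies the \(E[Y_{T_+}]\), and \(I\) supplies the \(\mu\). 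For \(\theta>1\), we instead show \(P(\tau=\infty)>0\) directly by a Lyapunov-function argument. Using the drift \(E[Y_{n+1}-Y_n\mid Y_n=y]\approx\mu+\nu-\rho\) and the conditional variance \(\approx\sigma^2 y\), the process behaves like a squared Bessel process of dimension \(2\theta\). The function \(y^{1-\theta}\) is then a supermartingale for large \(y\), which gives transience.

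\textbf{Step 3 (the main obstacle): the critical regimes \(0\le\theta\le1\).} Here I would iterate the recursion along \(f_k=F^{\circ k}\), using \(1-f_k(t)\sim 2/(\sigma^2k)\), as in classical Galton--Watson immigration arguments. The goal is to show that
\[
1-\sum_n G_n(f_k(0))\,\cdots
\]
behaves like \(k^{-(1-\theta)}\) times slowly varying corrections. More concretely, I would write \(G_n(1)\) as a product of factors \(\prod J(f_j)I(f_j)\frac{(1-p)f_j}{f_j-p}\approx\prod(1-\frac{2(\mu+\nu-\rho)}{\sigma^2 j})\approx n^{-\theta}\), minus the correction terms involving \(G_m(p)\). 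The correction term is the hard part. I would control it by showing \(G_m(p)\) is comparable to \(P(\tau=m+1)\), which makes the identity a renewal-type equation for the tail \(P(\tau>n)\). The \(E[X^3]\) moment (Assumption A) is needed for the second-order error in \(1-f_k\sim 2/(\sigma^2k)\) to be summable. At \(\theta=0\) the leading terms vanish, so one more order is required, which is why Assumption B and \(E[X^4]\) appear. The constants \(c_1,c_2,c_3\) would then come out explicitly from this renewal equation: \(1/\log n\) at \(\theta=1\), \(n^{\theta-1}\) for \(0<\theta<1\), and \(\frac{2}{\sigma^2}(E[Y_{T_+}]+\mu)/n\) at \(\theta=0\) by matching the same first-moment constant as in Step 2.
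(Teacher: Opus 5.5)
Your Step 1 is correct. Restricting to $\{Z_n>0\}$ neatly simplifies the paper's Lemma \ref{zn recurrence}: it removes the unknown $Z_n(0)$, and in fact $I(p)G_m(p)=P(\tau=m+1)$ exactly.

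The gap is Step 3, which is where the theorem actually lives, and its heuristic is wrong. At $t=1$ every factor $C(F_{k,1})$ equals $1$, so iterating only returns the tautology $P(\tau>n+1)=1-I(p)\sum_{m\le n}G_m(p)$. The tail is therefore not ``a product $\approx n^{-\theta}$ minus a correction'': it decays like $n^{-(1-\theta)}$, and the correction is of the same order as the main term. The missing step is to iterate along the orbit of $t=p$, which closes the system. With $D(x)=J(x)K(1/F(x))$,
\[
G_{n+1}(p)=\Pi_{n,p}\,G_0(F_{n+1,p})-I(p)\sum_{k=0}^{n}D(F_{k,p})\,\Pi_{k-1,p}\,G_{n-k}(p).
\]
The kernel enters with a minus sign and is not summable when $\theta\le 1$, so no renewal theorem applies. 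Passing to generating functions gives
\[
\sum_{n\ge 0}P(\tau>n)s^n=\frac{N(s)}{(1-s)\bigl(1+s\mathcal{A}_D(s)\bigr)},\qquad N(s)=1+I(p)\sum_{k\ge 0}\Pi_{k-1,p}\bigl(D(F_{k,p})-G_0(F_{k,p})\bigr)s^{k+1},
\]
where $\mathcal{A}_D(s)=I(p)\sum_k D(F_{k,p})\Pi_{k-1,p}s^k$. You must then prove three things.
\begin{enumerate}
\item[(a)] $\Pi_{n,p}\sim c_pn^{-\theta}$ with $c_p>0$. This needs the de Bruijn-type expansion $1-F_{n,p}=\frac{2}{\sigma^2 n}+\bigo(n^{-2}\log n)$, so that the errors in $\log C(F_{k,p})$ are summable; this is where Assumption A is used.
\item[(b)] $N(s)$ converges for $\theta>0$ to a \emph{strictly positive} limit, even though it is a difference of two series that each diverge like $\Psi(s)$. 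This is the paper's $C^*>0$, which rests on showing $\cala(1)=1-\calb(1)>0$ by a telescoping identity. Without it the Tauberian step yields nothing.
\item[(c)] Feller's Tauberian theorem applies, using that $P(\tau>n)$ is monotone.
\end{enumerate}
At $\theta=0$, $N(s)$ itself diverges logarithmically, and you only get $\sum_{k\le n}P(\tau>k)\sim c\log n$, which does not determine $P(\tau>n)$. The paper instead derives $Q'(s)\sim\frac{2}{\sigma^2}(E[Z_0]+\mu)(1-s)^{-1}$ from second-order expansions of $\Pi_{n,p}$ (this is where Assumption B enters), and then needs the monotone-density Proposition \ref{monotone density}. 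Your sketch contains none of these steps.

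The other two regimes also need repair.

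For $\theta<0$, differentiating the summed equation at $t=1$ requires $\mathcal{G}'(1)=\sum_nE[Z_n;Z_n>0]<\infty$, the expected excursion area. That quantity is infinite for $-1\le\theta<0$; this is precisely $\bar\pi=\infty$ in Theorem \ref{thm stationary mean}. Moreover, using the equation at all presupposes $\mathcal{G}(1)=E[\tau]<\infty$. Your route can be salvaged, and it is then shorter than the paper's, which reads $E[\tau]$ off as $Q(1)$ from the full $U/V$ asymptotics:
\begin{enumerate}
\item[(i)] Prove $E[\tau]<\infty$ first, e.g. by Foster's criterion, since the drift from large $y$ tends to $\mu+\nu-\rho<0$.
\item[(ii)] Deduce $I(p)\mathcal{G}(p)=P(\tau<\infty)=1$.
\item[(iii)] Divide $\mathcal{G}(t)-\mathcal{G}(F(t))$ by $1-t$. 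Since $0\le F(t)-t\le\frac{\sigma^2}{2}(1-t)^2$, and $(1-t)\mathcal{G}'(t)\to 0$ by dominated convergence, you get $\frac{\theta\sigma^2}{2}E[\tau]=\nu-\rho-E[Z_0]$, which is the stated formula.
\end{enumerate}
No second-order expansion is involved here; $\sigma^2$ enters only through $C'(1)=\mu+\nu-\rho=\theta\sigma^2/2$.

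For $\theta>1$, $y^{1-\theta}$ is exactly the critical exponent: the drift and variance contributions cancel at leading order. You need $y^{-\gamma}$ with $0<\gamma<\theta-1$, together with control of the jump errors. Even then you obtain only $c_1=P(\tau=\infty)>0$, not an explicit constant. For that you are back to the analysis above, now with $\mathcal{A}_D(1)<\infty$.
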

Theorem \ref{thm markov} follows easily from Theorem \ref{thm tau} and the identity \(P(T_0>n|Y_0=0)=P(\tau\geq n)P(Y_1>0|Y_0=0)\) for \(n\geq 1\).\footnote{If we condition on \(Y_0=y>0\), then our proof of Theorem \ref{thm tau} will show that the same estimates on \(P(\tau>n)\) apply to \(P(T_0>n)\) (with different constants). The assumption \(P(I=0)<1\) is no longer necessary in this case.}
\subsection{Results concerning the stationary distribution}
Theorem \ref{thm markov} classifies the Markov chain \(Y_n\) as positive recurrent when \(\theta<0\). Let \(\pi\) denote the stationary distribution of \(Y_n\), which we assume to be irreducible and aperiodic, and \(\Bar{\pi}=\sum_{k\geq 0}k\pi(k)\). We have the following results:
\begin{theorem}\label{thm stationary mean}
    Under Assumption B, if \(-1\leq \theta<1,\ \Bar{\pi}=\infty\). If \(\theta<-1\) then \(\Bar{\pi}<\infty\) and
    \[\Bar{\pi}=\frac{-\theta\mu}{4(\theta+1)(\nu-\rho)}\left(\theta\sigma^2-2\mu+\frac{4\nu}{\theta}+\frac{2\Var{I}}{\mu}-\frac{4\Var{I+J+K}}{\sigma^2\theta}\right).\]
\end{theorem}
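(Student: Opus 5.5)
Our approach is to work with the stationary generating function \(\Pi(t)=\sum_k \pi(k)t^k\) and derive a functional equation from the stationarity identity \(Y_{n+1}\overset{d}{=}Y_n\). Conditioning on \(Y_n=y\) and \(I_n\), and using that \(K\sim\geo{1-p}\), we have \(P(K<y+i)=1-p^{y+i}\). Moreover, on \(\{K<y+i\}\), \(y+i-K\) is distributed as \(y+i-\) a truncated geometric. Summing the geometric series gives an explicit formula. The memorylessness of \(K\) is what makes this closed form possible.

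\begin{align*}
E\bigl[t^{Y_{n+1}}\mid Y_n=y,I_n=i\bigr]
&=(1-(1-p))\cdot 0\ \text{(handled below)}\\
&\quad+J(t)\sum_{k=0}^{y+i-1}(1-p)p^kF(t)^{y+i-k}+p^{y+i}.
\end{align*}

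Writing \(s=F(t)\), the sum equals \(\frac{(1-p)s\,(s^{y+i}-p^{y+i})}{s-p}\). Averaging over \(\pi\) and \(I\) then yields an identity of the form
\[
\Pi(t)=J(t)\frac{(1-p)s}{s-p}\bigl(I(s)\Pi(s)-I(p)\Pi(p)\bigr)+I(p)\Pi(p).
\]
This is presumably the content of the paper's Corollary \ref{y functional eq}, applied to the stationary law, and we take it as our starting point. Here \(I(p)\Pi(p)=P(K\ge Y+I)\) is exactly the stationary mass sent to \(0\), which is \(\pi(0)\) since \(P(J>0)=1\). So the identity links \(\Pi\) at \(t\) and at \(F(t)\), with one unknown constant \(\pi(0)\).

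Next we iterate the relation along \(t\mapsto F(t)\) near \(t=1\), much as in the critical Galton–Watson case. Setting \(u=1-t\), the expansion \(1-F(t)=u-\frac{\sigma^2}{2}u^2+\frac{\alpha'}{6}u^3+\dots\) holds under Assumption B. We also expand \(J(t)\), \(I(s)\) and \(\frac{(1-p)s}{s-p}\), whose logarithm has derivative \(-\rho\) at \(s=1\), up to third order. Writing \(\Pi(t)=1-\bar\pi u+\dots\) formally and matching powers of \(u\), the first-order terms cancel identically. The second-order terms involve \(\bar\pi\) multiplied by a coefficient proportional to \(\theta\sigma^2/2+\) constants, and also \(\pi(0)\).

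Two unknowns, \(\pi(0)\) and \(\bar\pi\), must therefore be determined. We would obtain \(\pi(0)\) from Theorem \ref{thm tau} via Kac's formula: \(\pi(0)=1/E_0[T_0]\), where \(E_0[T_0]\) is expressed through \(E[\tau]\) and the probability of leaving \(0\). This gives \(\pi(0)\) in terms of \(E[Y_{T_+}]+\mu\), and \(E[Y_{T_+}]\) should itself be computable from \(J,I\) and the one-step law from \(0\). Substituting \(\pi(0)\) into the second- and third-order expansions then gives a linear equation for \(\bar\pi\) with coefficient proportional to \((\theta+1)\). This explains the factor \((\theta+1)\) in the denominator and the presence of \(\Var{I+J+K}\), which arises from the second moments of the combined immigration/emigration generating function \(I(s)J(t)K\)-type product.

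The main obstacle is rigor: justifying the expansion to the needed order when we do not yet know that \(\bar\pi<\infty\), and establishing \(\bar\pi=\infty\) for \(-1\le\theta<1\). For finiteness when \(\theta<-1\), the first thing to try is a drift (Foster–Lyapunov) argument with \(V(y)=y^2\). The conditional drift is \(E[Y_{n+1}^2-Y_n^2\mid Y_n=y]\approx(2(\nu+\mu-\rho)+\sigma^2)y=\sigma^2(\theta+1)y\). When this is negative, the standard comparison theorem gives \(\sum_y y\,\pi(y)<\infty\), and the Taylor matching is then legitimate. For infinitude when \(-1\le\theta<0\), we would argue by contradiction: if \(\bar\pi<\infty\), the same expansion, now justified, would force the formula to give a value that is either negative or undefined (at \(\theta=-1\)). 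Alternatively, a drift argument could show \(E_\pi[Y]<\infty\) is impossible when the second-moment drift is nonnegative. For \(0\le\theta<1\), the chain is not positive recurrent by Theorem \ref{thm markov}, so this case is vacuous or should be read with \(\pi\) as an invariant measure.
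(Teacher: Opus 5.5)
\paragraph{The gap: $\bar\pi=\infty$ for $-1<\theta<0$.} Your claim that this follows by contradiction from the expansion does not hold up. Write \(D(t)=J(t)K(1/F(t))\), so that \(C(t)=D(t)I(F(t))\) and your functional equation reads \(\Pi(t)-C(t)\Pi(F(t))=\pi(0)(1-D(t))\). If \(\bar\pi<\infty\), matching the \(u^2\) terms gives
\[
\sigma^2(\theta+1)\,\bar\pi=\pi(0)D''(1)-C''(1).
\]
With \(\pi(0)=(\mu+\nu-\rho)/(\nu-\rho)\), this is exactly the stated formula.

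For \(-1<\theta<0\) its right-hand side need not be negative. Take \(I\equiv J\equiv1\), \(\sigma^2=10\), \(\rho=5/2\), so \(\theta=-1/10\). The formula then returns \(4/27>0\), and the sign argument yields no contradiction.

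Your fallback drift idea is not yet an argument either. For \(V(y)=y^2\), the drift \(E[Y_{n+1}^2-Y_n^2\mid Y_n=y]\) equals \(\sigma^2(\theta+1)y+\kappa\) plus a boundary term of order \(p^y\). The constant \(\kappa\) can be negative (it is \(-6\) in this example). So a nonnegative slope proves nothing unless you establish pointwise nonnegativity of the drift, which you do not.

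The missing idea is the paper's monotone coupling in the emigration rate (Proposition \ref{y monotonicity}). Since \(Y_{n+1}\) is nondecreasing in \(Y_n+I_n-K_n\), raising \(\rho\) to \(\mu+\nu+\sigma^2/2\) with \(I,J,X\) fixed gives a pathwise smaller positive recurrent chain with \(\theta=-1\). Ergodic averages then reduce the claim to \(\theta=-1\).

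At \(\theta=-1\) your contradiction does work, but only once you check that the right-hand side is nonzero. One finds
\[
\pi(0)D''(1)-C''(1)=\frac{\mu}{\nu-\rho}\Bigl(\Var{J}+\Var{K}-\frac{(\nu-\rho)\Var{I}}{\mu}-\sigma^2\nu-(\mu+\nu-\rho)(\nu-\rho)\Bigr).
\]
At \(\rho=\mu+\nu+\sigma^2/2\) the bracket is at least \(\rho+(\mu+\nu)^2+\sigma^2\mu/2>0\). So the right-hand side is strictly negative while the left-hand side vanishes.

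\paragraph{Two smaller corrections.} After these, your route for \(\theta<-1\) is a genuine alternative to the paper's.

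First, the first-order terms do not cancel identically. The \(\bar\pi\) terms cancel, but what is left is \(C'(1)=\pi(0)D'(1)\), i.e.\ \(\pi(0)=(\mu+\nu-\rho)/(\nu-\rho)\). This needs no moment assumption, since \((1-t)\Pi'(t)\to0\) for any probability generating function. The detour through Kac's formula and Theorem \ref{thm tau} is therefore unnecessary, though it gives the same value.

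Second, \(\bar\pi\) is determined at second order, where \(E_\pi[Y(Y-1)]\) cancels. Writing \(\Pi(t)-\Pi(F(t))=-\Pi'(\xi)(F(t)-t)\) with \(\Pi'(\xi)\uparrow\bar\pi\) justifies the matching using only \(\bar\pi<\infty\), which your \(y^2\) drift bound supplies when \(\theta<-1\). Do not go to third order: it brings in \(E_\pi[Y^2]\), which you do not control.

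\paragraph{Comparison with the paper.} The paper instead iterates the functional equation along \(t_n=F_n(p)\). It uses Proposition \ref{fn asymptotics} and Lemma \ref{lemma prod c asymptotics} to compute \(Y'(1)=\lim_n(1-Y(t_{n+1}))/(1-t_{n+1})\) directly. That gives finiteness, the value, and the divergence at \(\theta=-1\) without any a priori bound, at the price of Assumption B and delicate asymptotics. Your version needs only second moments plus the Foster--Lyapunov comparison theorem.
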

\begin{theorem}\label{pi 0}
    Under Assumption A, if \(\theta<0\),
    \[\pi(0)=\frac{\mu+\nu-\rho}{\nu-\rho}.\]
\end{theorem}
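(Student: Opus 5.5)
By Theorem \ref{thm markov}, when \(\theta<0\) the chain is positive recurrent, so \(\pi\) exists. The plan is to derive a functional equation for the generating function \(\pi(t)=\sum_k\pi(k)t^k\) from one step of \eqref{def yn}, and then expand it to first order at \(t=1\). I will do this without assuming that \(\bar\pi\) is finite, since by Theorem \ref{thm stationary mean} it may be infinite for \(-1\le\theta<0\).

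\textbf{Step 1 (one-step generating function).} Condition on \(N=Y_n+I_n\). Since \(K\sim\geo{1-p}\), we have \(P(K\ge N\mid N)=p^N\). For \(k<N\) the contribution is \((1-p)p^kJ(t)F(t)^{N-k}\). Summing the geometric series gives
\[E[t^{Y_{n+1}}\mid N]=A(t)\,\frac{F(t)^N-p^N}{1}+p^N,\qquad A(t):=\frac{(1-p)J(t)F(t)}{F(t)-p}.\]
Taking expectations in stationarity, and using that \(\pi(0)=P(K\ge Y+I)=E[p^{Y+I}]=I(p)\pi(p)\), yields
\[\pi(t)=A(t)\,I(F(t))\,\pi(F(t))-(A(t)-1)\,\pi(0).\]
So the unknown boundary term is exactly \(\pi(0)\), which is what makes the identity useful.

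\textbf{Step 2 (rewrite in terms of \(g=1-\pi\)).} Set \(g(t)=1-\pi(t)\) and \(s=1-t\). The equation becomes
\[g(t)-A(t)\,g(F(t))=(1-A(t))(1-\pi(0))+A(t)\bigl(1-I(F(t))\bigr)\,\pi(F(t)).\]
Using \(E[X]=1\), \(E[J]=\nu<\infty\) and \(E[I]=\mu<\infty\) (Assumption A), we get the expansions
\[F(t)=1-s+O(s^2),\qquad A(t)=1-(\nu-\rho)s+o(s),\qquad 1-I(F(t))=\mu s+o(s).\]
Here \(A'(1)=\nu+1-\tfrac{1}{1-p}=\nu-\rho\). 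Since \(g(F(t))\to0\), the term \((A(t)-1)g(F(t))\) is \(o(s)\). Likewise \(\pi(F(t))=1+o(1)\). Hence
\[g(t)-g(F(t))=(\nu-\rho)(1-\pi(0))\,s+\mu s+o(s).\]

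\textbf{Step 3 (main obstacle: show \(g(t)-g(F(t))=o(s)\) even if \(\bar\pi=\infty\)).} By convexity of \(F\) and \(F'(1)=1\), we have \(t\le F(t)\le1\) and \(F(t)-t=O(s^2)\). By the mean value theorem,
\[|g(t)-g(F(t))|\le (F(t)-t)\sup_{u\in[t,1]}|g'(u)|\cdot\tfrac{}{}\]
but the supremum near 1 may blow up, so instead I bound \(g'\) at the left endpoint. Since \(g'(u)=-\sum_k k\pi(k)u^{k-1}\) is monotone in \(u\), on \([t,F(t)]\) we have \(|g'|\le \sum_k k\pi(k)F(t)^{k-1}\le C\sum_k k\pi(k)(1-s/2)^{k}\) for small \(s\). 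This gives
\[|g(t)-g(F(t))|\le C s^2\,E_\pi\!\bigl[Y(1-s/2)^{Y}\bigr]\le C' s\,E_\pi\!\bigl[sY e^{-sY/2}\bigr].\]
The random variable \(sYe^{-sY/2}\) is bounded by a constant and tends to \(0\) pointwise as \(s\downarrow0\). By dominated convergence the right side is \(o(s)\).

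\textbf{Conclusion.} Comparing with Step 2 gives \((\nu-\rho)(1-\pi(0))+\mu=0\). Because \(\theta<0\) means \(\mu+\nu<\rho\), we have \(\nu-\rho<0\), and therefore
\[\pi(0)=1+\frac{\mu}{\nu-\rho}=\frac{\mu+\nu-\rho}{\nu-\rho}.\]
The only delicate point is Step 3, where the possibly infinite stationary mean is avoided by the dominated convergence bound. Everything else is a first-order Taylor expansion that uses only the first moments guaranteed by Assumption A.
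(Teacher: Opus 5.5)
Your proof is correct, and it takes a genuinely different route from the paper's.

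\textbf{How the paper argues.} The paper starts from the same functional equation as your Step 1 (its Corollary \ref{y functional eq}, with \(B(t)Y(p)=(1-A(t))\pi(0)\) and \(C(t)=A(t)I(F(t))\) in your notation). It does not expand locally. Instead it iterates the equation along the orbit \(t_n=F_{n,p}\), which gives the closed form \eqref{ytn} for \(Y(t_{n+1})\) in terms of the products \(\Pi_n\). It then uses Lemma \ref{lemma prod c asymptotics} (\(\Pi_n\sim c_pn^{-\theta}\), so \(\Pi_n^{-1}\to0\) when \(\theta<0\)) and Proposition \ref{fn asymptotics} to show
\[\Pi_n^{-1}\sum_{k\le n}(I^{-1}(t_{k+1})-1)\Pi_k\to -a/\theta, \qquad a=2\mu/\sigma^2.\]
Letting \(n\to\infty\) gives \(1=Y(p)I(p)(1-a/\theta)\), and hence \(\pi(0)=Y(p)I(p)=\frac{\mu+\nu-\rho}{\nu-\rho}\).

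\textbf{How your argument differs.} You make a single first-order expansion at \(t=1\). You close it with the Abelian fact that \(s\,\pi'(1-s)\to0\) for any probability distribution on \(\N_0\), whether or not its mean is finite. This is exactly what makes a possibly infinite \(\bar\pi\) harmless.

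\textbf{What each approach buys.} Your route is shorter and avoids de Bruijn's expansion and the product asymptotics entirely. Once \(\pi\) is known to exist (which still comes from Theorem \ref{thm markov}), the identity uses only \(E[I],E[J],E[X^2]<\infty\), which is weaker than Assumption A. The paper's iteration is more laborious, but it is built to carry the expansion of \(Y(t_{n+1})\) to the next order, and that is what produces \(\bar\pi\) in Theorem \ref{thm stationary mean}. In the paper, \(\pi(0)\) is just the by-product of the leading-order term.

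\textbf{Two small wording issues in Step 3.}
\begin{itemize}
\item You say you bound \(g'\) ``at the left endpoint,'' but the bound you actually use is at the right endpoint \(F(t)\), since \(|g'|=\pi'\) is increasing. That bound works because \(1-F(t)\ge s/2\) for small \(s\).
\item The estimate \(F(t)-t=O(s^2)\) comes from \(E[X^2]<\infty\), not from convexity alone. Convexity only gives \(F(t)\ge t\).
\end{itemize}
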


\section{Proofs of the main results for the branching process}
\subsection{Proofs of the claims concerning recurrence, transience, and life-periods} Define the process stopped at 0 as \(Z_0\overset{d}{=}Y_{T_+}\) and for \(n\geq 0\) as
\[Z_{n+1}\overset{d}{=}Y_{n+1+T_+}\ind{Z_n>0}.\]
We have \(P(Z_n>0)=P(\tau>n)\) and for \(n\geq 0\),
\[Z_{n+1}=\Bigl(J_n+\sum_{m=1}^{Z_n+I_n-K_n}X_{m,n}\Bigr)\ind{K_n<Z_n+I_n,Z_n>0}.\]
Let \(Z_n(t)=E[t^{Z_n}]\) to be the probability generating function of \(Z_n\). Our first lemma is a recurrence for \(Z_n(t)\):
\begin{lemma}\label{zn recurrence}
    For \(n\geq 0\), for any \(t\) with \(F(t)\neq p\), \[Z_{n+1}(t)=Z_n(0)A(t)+Z_n(p)B(t)+Z_n(F(t))C(t)\]
    with, recalling \(K(t)=(1-p)(1-pt)^{-1}\),
    \begin{align*}
        A(t)&=1-I(p)-J(t)K\left((F(t))^{-1}\right)\left(I(F(t))-I(p)\right),\\
        B(t)&=I(p)\left(1-J(t)K\left((F(t))^{-1}\right)\right),\quad\text{and}\quad C(t)=J(t)I(F(t))K\left((F(t))^{-1}\right).\\
    \end{align*}
\end{lemma}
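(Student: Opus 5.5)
We prove the recurrence by conditioning on \(Z_n\) and computing \(E[t^{Z_{n+1}}\mid Z_n=z]\) exactly, then averaging over \(z\).

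On \(\{Z_n=0\}\) we have \(Z_{n+1}=0\), which contributes \(P(Z_n=0)=Z_n(0)\).

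Fix \(z\ge1\). By independence of \(I_n,J_n,K_n\) and the \(X_{m,n}\),
\[
E[t^{Z_{n+1}}\mid Z_n=z]=P(K\ge z+I)+J(t)\,E\bigl[F(t)^{z+I-K}\ind{K<z+I}\bigr].
\]
First, conditioning on \(I\) and using \(P(K\ge k)=p^k\),
\[
P(K\ge z+I)=p^z I(p).
\]
Second, for fixed \(N=z+I\ge1\), write \(s=F(t)\). The geometric law of \(K\) gives
\[
E[s^{N-K}\ind{K<N}]=\sum_{k=0}^{N-1}(1-p)p^k s^{N-k}=(1-p)s^N\,\frac{1-(p/s)^N}{1-p/s}=\frac{(1-p)s}{s-p}\,(s^N-p^N).
\]
The hypothesis \(F(t)\neq p\) is exactly what makes this geometric sum valid. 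Note also that
\[
K(s^{-1})=\frac{1-p}{1-p/s}=\frac{(1-p)s}{s-p},
\]
so the sum equals \(K(F(t)^{-1})\bigl(F(t)^N-p^N\bigr)\).

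Averaging over \(I\), for \(z\ge1\) we get
\[
E[t^{Z_{n+1}}\mid Z_n=z]=p^zI(p)+J(t)K(F(t)^{-1})\bigl(F(t)^zI(F(t))-p^zI(p)\bigr).
\]
This is of the form \(p^z\,\beta(t)+F(t)^z\,C(t)\), where
\[
\beta(t)=I(p)\bigl(1-J(t)K(F(t)^{-1})\bigr)=B(t),\qquad C(t)=J(t)I(F(t))K(F(t)^{-1}).
\]

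Now sum over \(z\ge1\) with weights \(P(Z_n=z)\). Writing
\[
\sum_{z\ge1}P(Z_n=z)\,p^z=Z_n(p)-Z_n(0),\qquad \sum_{z\ge1}P(Z_n=z)\,F(t)^z=Z_n(F(t))-Z_n(0),
\]
we obtain
\[
Z_{n+1}(t)=Z_n(0)+\bigl(Z_n(p)-Z_n(0)\bigr)B(t)+\bigl(Z_n(F(t))-Z_n(0)\bigr)C(t).
\]
The coefficient of \(Z_n(0)\) is
\[
1-B(t)-C(t)=1-I(p)+J(t)K(F(t)^{-1})I(p)-J(t)K(F(t)^{-1})I(F(t))=1-I(p)-J(t)K(F(t)^{-1})\bigl(I(F(t))-I(p)\bigr),
\]
which matches \(A(t)\). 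This gives the stated recurrence.

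The only delicate points are convergence of the generating functions at \(F(t)\) and \(p\), and the exclusion of \(F(t)=p\). These are handled by restricting \(t\) so that \(|F(t)|\le1\) (e.g. \(t\in[0,1]\)), and more generally by reading the identity as one between analytic functions wherever both sides are defined. The main bookkeeping obstacle is separating the \(z=0\) term correctly, because the indicator \(\ind{Z_n>0}\) kills even the immigrant contributions there.
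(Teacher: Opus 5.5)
Your proof is correct and follows essentially the same route as the paper: condition on \(Z_n\), sum the geometric law of \(K\) against \(F(t)^{Z_n+I-K}\) (which is where \(F(t)\neq p\) enters), and peel off the \(Z_n=0\) contribution, which produces \(A(t)=1-B(t)-C(t)\). The only difference is bookkeeping. You compute the full conditional generating function for \(z\ge 1\) in one step, whereas the paper computes \(P(Z_{n+1}=0\mid Z_n)\) and \(\sum_{x\ge 1}P(Z_{n+1}=x\mid Z_n)t^x\) separately and uses \(P(J>0)=1\) to extend the latter sum to \(x=0\); as a result your version does not actually need \(J>0\).
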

\begin{proof}
Since \(J_n\) is strictly positive,
\begin{align*}
    P(Z_{n+1}=0|Z_n)&=\ind{Z_n=0}+\ind{Z_n>0}P(K_n\geq Z_n+I_n|Z_n)\\
    &=\ind{Z_n=0}(1-P(K\geq I))+P(K\geq Z_n+I|Z_n)\\
    &=\ind{Z_n=0}\Bigl(1-\sum_{i\geq 0}P(I=i)P(K\geq i)\Bigr)+\sum_{k\geq 0}P(I=i)P(K\geq i+Z_n|Z_n)\\
    &=\ind{Z_n=0}\Bigl(1-\sum_{i\geq 0}P(I=i)p^i\Bigr)+p^{Z_n}\sum_{i\geq 0}P(I=i)p^i\\
    &=\ind{Z_n=0}(1-I(p))+p^{Z_n}I(p)
\end{align*}
and so taking expectations yields
\[P(Z_{n+1}=0)=Z_n(0)(1-I(p))+Z_n(p)I(p)\]
Now let \(x\geq1\). Set \(S_k\overset{d}{=}\sum_{m=1}^kX_{m,n}\). Similar manipulations yield
\[P(Z_{n+1}=x|Z_n)=\ind{Z_n>0}\sum_{i\geq 0}P(I=i)\sum_{k=0}^{Z_n+i-1}P(K=k)\sum_{j=0}^xP(J=j)P(S_{Z_n+i-k}=x-j).\]
Since \(J\) is positive, this expression is 0 when \(x=0\), so to compute \(\sum_{x\geq 1}P(Z_{n+1}=x|Z_n)t^x\) using our above expression, we may sum from 0. After an application of Fubini, this sum simplifies as
\begin{align*}
&\ind{Z_n>0}\sum_{i\geq 0}P(I=i)\sum_{j\geq 0}P(J=j)t^j\sum_{k=0}^{Z_n+i-1}P(K=k)\sum_{x\geq j}t^{x-j}P(S_{Z_n+i-k}=x-j)\\
&=\ind{Z_n>0}(1-p)J(t)\sum_{i\geq 0}P(I=i)\sum_{k=0}^{Z_n+i-1}p^k(F(t))^{Z_n+i-k}\\
&=\ind{Z_n>0}(1-p)J(t)\left(\frac{(F(t))^{Z_n}I(F(t))-p^{Z_n}I(p)}{1-p/F(t)}\right)\\
&=(1-p)J(t)\left(\frac{(F(t))^{Z_n}I(F(t))-p^{Z_n}I(p)}{1-p/F(t)}\right)-\ind{Z_n=0}(1-p)J(t)\left(\frac{I(F(t))-I(p)}{1-p/F(t)}\right),
\end{align*}
where we used \(F(t)\neq p\) to get from the second to the third line. Taking expectations here yields \(\sum_{x\geq 1}P(Z_{n+1}=x)t^x\) equals
\[(1-p)J(t)\left(\frac{Z_n(F(t))I(F(t))-Z_n(p)I(p)}{1-p/F(t)}\right)-Z_n(0)(1-p)J(t)\left(\frac{I(F(t))-I(p)}{1-p/F(t)}\right).\]
Combining this with the expression for \(P(Z_{n+1}=0)\) and rearranging terms produces the claim.
\end{proof}
\begin{remark}\label{a+b+c=1}
    Let \(A(t),\ B(t),\) and \(C(t)\) be as above. Then \(A(t)+B(t)+C(t)=1\) for any \(t\) with \(F(t)\neq p\).
\end{remark}
In the next several pages, we will use the above recurrence to derive an expression for the generating function of \(P(Z_n>0)\). Recursing on the final term in Lemma \ref{zn recurrence}, we will find an expression for \(Z_{n+1}(t)\) in terms of two convolutions and a third term. Evaluation of this expression at \(t=0\) and \(t=p\) yields recurrences for \(Z_n(0)\) and \(Z_n(p)\) in terms of convolutions of their previous values with two other sequences. Since the generating function of a convolution of two sequences is just the product of those sequences' respective generating functions, defining several generating functions will convert the two recurrences into a system of two equations with two unknowns, which we solve to find an expression for the generating function of \(P(Z_n>0)\). To proceed, we will need considerably more notation. Let \(F_{0,t}=t\) and, for \(n\geq 0\), set \(F_{n+1,t}=F(F_{n,t})\). \(C(t)\) is not defined for \(t\) with \(F(t)=p\). Assuming \(F_{k,t}\neq p\) for \(1\leq k\leq n\), set \(\Pi_{-1,t}=1\) and for \(n\geq 0,\ \Pi_{n,t}=\prod_{k=0}^nC(F_{k,t})\). Let \(G(t)=Z_0(t)=E[t^{Z_0}]=E[t^{Y_{T^+}}]\). Lastly, set
\begin{align*}
    &\cala(t)=\sum_{n\geq 0}A(F_{n,p})\Pi_{n-1,p}t^n\qquad\calb(t)=\sum_{n\geq 0}B(F_{n,p})\Pi_{n-1,p}t^n\\
    &\calc(t)=\sum_{n\geq 0}G(F_{n+1,p})\Pi_{n,p}t^n\qquad H(t)=\sum_{n\geq 0}(1-I(p))^nI(p)^n=I(p)(1-(1-I(p))t)^{-1}.
\end{align*}
Lemma \ref{several expansions} will establish \(\cala(t),\calb(t),\) and \(\calc(t)\) are well-defined for \(0\leq t<1\).
\begin{lemma}\label{q gf}
Let \(Q(t)=\sum_{n\geq 0}P(Z_n>0)t^n\).  Then, for all \(t\) sufficiently close to 1, \(Q(t)=(1-t)^{-1}-U(t)/V(t)\) with
\[V(t)=1-t\calb(t)-t^2H(t)\cala(t)\quad\text{and}\quad U(t)=tH(t)(G(p)+t\calc(t))\]
\end{lemma}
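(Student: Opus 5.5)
The plan is to unroll the recurrence of Lemma \ref{zn recurrence} along the orbit \(t\mapsto F(t)\). This produces two coupled linear recurrences for the sequences \(a_n:=Z_n(0)=P(Z_n=0)\) and \(b_n:=Z_n(p)\), which we then solve with generating functions.

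First we check that every \(F_{k,p}\) is admissible, i.e. \(F_{k,p}\neq p\) for \(k\geq 1\). Since \(E[X]=1\) and \(\sigma^2>0\), \(F\) is strictly convex on \([0,1]\) with \(F(1)=1\) and \(F'(1)=1\). Hence \(F(s)>s\) for \(s\in[0,1)\), so \(F_{k,p}\) increases strictly in \(k\) and \(F_{k,p}>p\) for all \(k\geq1\). Thus \(A,B,C\) are defined at every \(F_{k,p}\), and the \(\Pi_{n,p}\) make sense.

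Next, apply Lemma \ref{zn recurrence} to the term \(Z_n(F(t))\), then to \(Z_{n-1}(F_{2,t})\), and so on down to \(Z_0=G\). By induction on \(n\) this gives
\[Z_{n+1}(t)=\sum_{k=0}^{n}\bigl(Z_{n-k}(0)A(F_{k,t})+Z_{n-k}(p)B(F_{k,t})\bigr)\Pi_{k-1,t}+G(F_{n+1,t})\Pi_{n,t}.\]
Evaluating at \(t=p\) yields the convolution identity
\[b_{n+1}=\sum_{k=0}^n\bigl(a_{n-k}A(F_{k,p})+b_{n-k}B(F_{k,p})\bigr)\Pi_{k-1,p}+G(F_{n+1,p})\Pi_{n,p},\qquad b_0=G(p).\]
For \(a_n\), we do not evaluate at \(t=0\). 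Instead we use the formula from the proof of Lemma \ref{zn recurrence}, which gives
\[a_{n+1}=(1-I(p))a_n+I(p)b_n,\]
with \(a_0=0\) because \(Z_0=Y_{T_+}>0\).

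Now set \(\mathsf a(t)=\sum_n a_nt^n\) and \(\mathsf b(t)=\sum_n b_nt^n\). Both converge for \(|t|<1\), since \(0\le a_n,b_n\le1\). The first recurrence gives \(\mathsf a=t(1-I(p))\mathsf a+tI(p)\mathsf b\), i.e. \(\mathsf a(t)=tH(t)\mathsf b(t)\). Since the generating function of a convolution is the product of generating functions, the second recurrence gives
\[\mathsf b(t)-G(p)=t\bigl(\mathsf a(t)\cala(t)+\mathsf b(t)\calb(t)+\calc(t)\bigr).\]
Substituting \(\mathsf a=tH\mathsf b\) gives \(\mathsf b(t)V(t)=G(p)+t\calc(t)\), hence \(\mathsf a(t)=U(t)/V(t)\). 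Since \(P(Z_n>0)=1-a_n\), we get \(Q(t)=(1-t)^{-1}-\mathsf a(t)=(1-t)^{-1}-U(t)/V(t)\).

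The main obstacle is analytic rather than algebraic. We must justify the manipulations as genuine identities of functions, not just formal power series: \(\cala,\calb,\calc\) must converge (this is what Lemma \ref{several expansions} supplies for \(0\le t<1\)), Fubini/Cauchy products must be valid, and we must be allowed to divide by \(V(t)\). The identity \(\mathsf b V=G(p)+t\calc\) holds on all of \([0,1)\). The division by \(V\) is what forces the restriction to \(t\) sufficiently close to \(1\): I would show that \(V(t)\neq0\) there. If \(V\) vanished at a point \(t_0\) near \(1\), then \(G(p)+t_0\calc(t_0)=0\); but \(G(p)>0\) and \(\calc\geq0\), since \(C>0\) at the points \(F_{k,p}\) and \(G>0\). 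So \(V\) has no zeros on \([0,1)\), and the formula for \(Q\) follows.
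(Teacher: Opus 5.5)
Your proof is correct and follows the paper's route: unroll Lemma \ref{zn recurrence} along the orbit \(F_{k,p}\), evaluate at \(t=p\), combine this with \(Z_{n+1}(0)=(1-I(p))Z_n(0)+I(p)Z_n(p)\), and solve the two resulting generating-function equations. The one difference is how you show \(V(t)\neq 0\): the paper defers this, and also \(1-t\calb(t)\neq 0\), to the asymptotics in the proof of Theorem \ref{thm tau}, whereas your observation that \(\mathsf b(t)V(t)=G(p)+t\calc(t)\geq G(p)>0\) is self-contained and gives the identity on all of \([0,1)\).
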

\begin{proof}
Assume \(F_{k,t}\neq p\) for all \(k\geq 1\). Recursing on only the final term in Lemma \ref{zn recurrence} produces
\begin{equation}\label{zn recurrence 2}
    Z_{n+1}(t)=\sum_{k=0}^nZ_{n-k}(0)A(F_{k,t})\Pi_{k-1,t}+\sum_{k=0}^nZ_{n-k}(p)B(F_{k,t})\Pi_{k-1,t}+G(F_{n+1,t})\Pi_{n,t}.
\end{equation}
Now the assumption \(\sigma^2>0\) implies \(F(x)>x\) for all \(0\leq x<1\), and since \(F(x)\) is increasing, we thus have \(F_{k,p}>p\) for all \(k\geq 1\). This implies \(\Pi_{n,p}\) is well-defined and positive for all \(n\). Moreover, we are free to evaluate \ref{zn recurrence 2} at \(t=p\).
Define the generating functions \(R(t)=\sum_{n\geq 0}Z_n(0)t^n\) and \(P(t)=\sum_{n\geq 0}Z_n(p)t^n\), which are a priori well-defined for \(0\leq t<1\). Evaluating the above recurrence at \(p\) then taking generating functions on both sides of the equation gives
\begin{equation}\label{p}
P(t)=G(p)+tR(t)\cala(t)+tP(t)\calb(t)+t\calc(t)
\end{equation}
since \(Z_0(p)=G(p)\). Now recall from the proof of Lemma \ref{zn recurrence} that
\[P(Z_{n+1}=0)=Z_n(0)(1-I(p))+Z_n(p)I(p).\]
Since \(Z_0(0)=0\), this unwinds as
\[Z_{n+1}(0)=\sum_{k=0}^nZ_{n-k}(p)(1-I(p))^kI(p),\]
and so
\begin{equation}\label{r}
R(t)=tH(t)P(t).
\end{equation}
When \(1-t\calb(t)\neq 0\) and \(V(t)\neq 0\), we may solve the system of equations \eqref{p} and \eqref{r}  to get
\[R(t)=\frac{U(t)}{V(t)}\]
with \(U(t)\) and \(V(t)\) as above. When \(|\calb(1)|<\infty\), our proof of Theorem \ref{thm tau} will show \(\calb(1)<1\), and so \(1-t\calb(t)\) is nonzero for all \(t\) sufficiently close to 1. We will also find leading order asymptotics for \(V(t)\) in that proof that will easily imply \(V(t)\neq 0\) for all \(t<1\) but sufficiently close. Finally, \(Q(t)=(1-t)^{-1}-R(t)\).
\end{proof}
The following proposition establishes the asymptotics of sequence \(\{F_{n,t}\}_{n\geq 0}\). The proof is a straightforward adaptation of de Bruijn's argument and is provided in the appendix.
\begin{proposition}\label{fn asymptotics}
    Let \(0\leq t<1\). If \(E[X^3]<\infty\), \(F_{n,t}=1-\frac{2}{\sigma^2}n^{-1}+\bigo(n^{-2}\log(n))\). If \(E[X^4]<\infty\), \(F_{n,t}=1-\frac{2}{\sigma^2}n^{-1}+\omega n^{-2}\log(n)+C_tn^{-2}+\bigo(n^{-3}\log^2(n))\) where \(\omega=\frac{2(3\sigma^4+6\sigma^2-2\alpha+2)}{3\sigma^6}\) and \(C_t\) is a constant dependent on \(t\).
\end{proposition}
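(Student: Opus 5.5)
The plan is to follow de Bruijn's classical argument for iterates of a generating function near its attracting fixed point $1$. Set $\varepsilon_n = 1 - F_{n,t}$. Since $\sigma^2>0$ and $E[X]=1$, we have $F(x)>x$ on $[0,1)$ with $F$ increasing, so $\varepsilon_n \downarrow 0$. Taylor expansion of $F$ at $1$ gives
\[
\varepsilon_{n+1} = \varepsilon_n - \tfrac{\sigma^2}{2}\varepsilon_n^2 + \tfrac{\beta}{6}\varepsilon_n^3 + r(\varepsilon_n),
\]
with the following ingredients:
\begin{itemize}
\item $F''(1)=E[X(X-1)]=\sigma^2$, using $E[X]=1$;
\item $\beta = F'''(1) = E[X(X-1)(X-2)] = \alpha - 3(\sigma^2+1) + 2$;
\item a remainder $r(\varepsilon)=o(\varepsilon^2)$ in general, $r(\varepsilon)=\bigo(\varepsilon^3)$ when $E[X^3]<\infty$, and $r(\varepsilon)=\bigo(\varepsilon^4)$ when $E[X^4]<\infty$.
\end{itemize}
The remainder bounds follow from Taylor's theorem with the integral form of the remainder, since $F^{(k)}$ is bounded on $[0,1]$ whenever the $k$-th factorial moment is finite.

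Next, pass to reciprocals: $w_n = 1/\varepsilon_n$. Expanding $1/(\varepsilon(1-a\varepsilon+b\varepsilon^2+\dots))$ gives
\[
w_{n+1} = w_n + \tfrac{\sigma^2}{2} + \gamma\,\varepsilon_n + \bigo(\varepsilon_n^2),
\qquad
\gamma = \tfrac{\sigma^4}{4} - \tfrac{\beta}{6}.
\]
Under $E[X^3]<\infty$, $\varepsilon_n\to 0$ gives $w_{n+1}-w_n\to \sigma^2/2$, so $w_n\sim \sigma^2 n/2$ and $\varepsilon_n=\bigo(1/n)$. Summing the increments then yields
\[
w_n = \tfrac{\sigma^2}{2}n + \bigo\Bigl(\sum_{k\le n}\varepsilon_k\Bigr) = \tfrac{\sigma^2}{2}n + \bigo(\log n).
\]
Inverting, $\varepsilon_n = \frac{2}{\sigma^2}n^{-1}\bigl(1+\bigo(n^{-1}\log n)\bigr)$, which is the first claim. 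Some care is needed: the third-order term is only $\bigo(\varepsilon^3)$ rather than having an explicit coefficient, but that suffices here.

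For the second claim, under $E[X^4]<\infty$, the increment has the refined form $w_{n+1}-w_n = \frac{\sigma^2}{2}+\gamma\varepsilon_n+\bigo(\varepsilon_n^2)$. Substitute $\varepsilon_k = \frac{2}{\sigma^2 k}+\bigo(k^{-2}\log k)$ from the first step. The error sum $\sum_k \bigo(k^{-2}\log k)$ converges, and so does $\sum_k\bigo(\varepsilon_k^2)$. Hence
\[
w_n = \tfrac{\sigma^2}{2}n + \tfrac{2\gamma}{\sigma^2}\log n + D_t + o(1).
\]
The $o(1)$ is in fact $\bigo(n^{-1}\log n)$, coming from the tails of the convergent series. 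The constant $D_t$ depends on the starting point $t$ and absorbs all the convergent sums.

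Inverting once more,
\[
\varepsilon_n = \tfrac{2}{\sigma^2 n}\Bigl(1 - \tfrac{4\gamma}{\sigma^4}\tfrac{\log n}{n} - \tfrac{2D_t}{\sigma^2 n} + \bigo\bigl(n^{-2}\log^2 n\bigr)\Bigr).
\]
This gives $F_{n,t} = 1-\varepsilon_n$ with log-coefficient $\omega = 8\gamma/\sigma^6$. Checking against the stated formula: $8\gamma/\sigma^6 = (2\sigma^4 - \tfrac{4}{3}\beta)/\sigma^6$, and with $\beta=\alpha-3\sigma^2-1$ this equals
\[
\frac{2(3\sigma^4+6\sigma^2-2\alpha+2)}{3\sigma^6},
\]
matching the stated $\omega$. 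The constant is $C_t = -4D_t/\sigma^4$.

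I expect the main obstacle to be the error bookkeeping in this second inversion. One must make sure the $\bigo(n^{-2}\log n)$ errors from the first stage, fed through the sum, produce a convergent series with tail $\bigo(n^{-1}\log n)$. After dividing by $w_n^2 \asymp n^2$ in the inversion, this yields the claimed $\bigo(n^{-3}\log^2 n)$ remainder, where the $\log^2 n$ arises from squaring the $\log n$ correction. Uniformity in $t$ is not required, since $C_t$ is allowed to depend on $t$.
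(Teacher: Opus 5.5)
Your proposal is correct and is essentially the paper's own argument. Both use de Bruijn's trick: pass to $1/(1-F_{n,t})$, obtain $\frac{\sigma^2}{2}n+\bigo(\log n)$, then isolate the $\frac{2\gamma}{\sigma^2}\log n$ term plus a convergent constant with $\bigo(n^{-1}\log n)$ tail, and invert. Your $\gamma=\frac{\sigma^4}{4}-\frac{\beta}{6}$ is the paper's $a^2-b$, and your $\omega$ checks out.

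The only difference is presentational: you sum the substituted increments directly, whereas the paper introduces the auxiliary sequences $r_n=y_n-an$ and $w_n=r_n-(a^2-b)a^{-1}\log n$.

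There is one trivial sign slip. Since $F_{n,t}=1-\varepsilon_n$, the constant is $C_t=+4D_t/\sigma^4$, not $-4D_t/\sigma^4$. This does not affect the result, because $C_t$ is left unspecified.
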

We will need the following estimate on \(\Pi_{n,t}\):
\begin{lemma}\label{lemma prod c asymptotics}
    Let \(p\leq t<1\) and suppose Assumption A holds. There exists a constant \(c_t>0\) dependent on \(t\) such that \[\Pi_{n,t}= c_tn^{-\theta}(1+\bigo(n^{-1}\log(n)).\] If Assumption B holds, \[\Pi_{n,t}=c_tn^{-\theta}\left(1-\omega C'(1)n^{-1}\log(n)-\beta n^{-1}+\bigo(n^{-2}\log^2(n))\right)\]
    with \(\beta=\frac{2}{\sigma^4}C''(1)+(C_t+\omega)C'(1)+\frac{\theta(1-\theta)}{2}\) and \(C'(1)=\mu+\nu-\rho\), where \(C_t\) is another constant dependent on \(t\) and \(\omega\) is as in Proposition \ref{fn asymptotics}.
\end{lemma}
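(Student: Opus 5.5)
The plan is to take logarithms and write
\[\log \Pi_{n,t}=\sum_{k=0}^n \log C(F_{k,t}).\]
We then expand \(\log C\) near \(1\) and insert the asymptotics of \(F_{k,t}\) from Proposition \ref{fn asymptotics}.

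First, check that \(C(1)=J(1)I(1)K(1)=1\), so each factor tends to \(1\). Compute \(C'(1)\) from \(C(t)=J(t)I(F(t))K(1/F(t))\). Since \(F'(1)=E[X]=1\), we get \(C'(1)=\nu+\mu-K'(1)=\mu+\nu-\rho\), as stated. Since \(p\le t<1\), we have \(F_{k,t}\ge p\), and \(F_{k,t}>p\) for \(k\ge1\) because \(F(x)>x\); hence no factor is singular, except possibly \(k=0\) with \(t=p\). That factor is a fixed finite one, since \(C(p)\) is finite as \(K(1/F(p))\) has \(F(p)>p\). Assumption A gives \(I,J\) second moments and \(X\) a third moment, so \(C\) is \(C^2\) on a left neighbourhood of \(1\) (one-sided derivatives). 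Under Assumption B it is \(C^3\). \(K(1/F)\) is analytic near \(1\).

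Taylor expanding, with \(\epsilon_k=1-F_{k,t}\), gives
\[\log C(F_{k,t})=-C'(1)\epsilon_k+\tfrac12\bigl(C''(1)-C'(1)^2\bigr)\epsilon_k^2+\bigo(\epsilon_k^3).\]
Under Assumption A, Proposition \ref{fn asymptotics} gives \(\epsilon_k=\frac{2}{\sigma^2}k^{-1}+\bigo(k^{-2}\log k)\). Then
\[\log C(F_{k,t})=-\theta k^{-1}+\bigo(k^{-2}\log k),\]
because \(C'(1)\cdot 2/\sigma^2=\theta\). Summing, \(\sum_{k\le n}\bigl(\log C(F_{k,t})+\theta/k\bigr)\) converges, with tail \(\bigo(n^{-1}\log n)\). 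Also \(\sum_{k\le n}1/k=\log n+\gamma+\bigo(1/n)\). Hence
\[\log\Pi_{n,t}=-\theta\log n+\log c_t+\bigo(n^{-1}\log n).\]
Exponentiating gives the first claim, with \(c_t>0\) because every factor is positive.

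For Assumption B, the same route is carried one order further. Insert
\[\epsilon_k=\tfrac{2}{\sigma^2}k^{-1}-\omega k^{-2}\log k-C_tk^{-2}+\bigo(k^{-3}\log^2k)\]
into the Taylor expansion, which now has a third-order remainder \(\bigo(k^{-3})\). This gives
\[\log C(F_{k,t})=-\theta k^{-1}+C'(1)\omega k^{-2}\log k+D k^{-2}+\bigo(k^{-3}\log^2 k),\]
where \(D=C'(1)C_t+\frac{2}{\sigma^4}(C''(1)-C'(1)^2)\). The tails are then computed explicitly:
\[\sum_{k>n}k^{-2}\log k=n^{-1}\log n+n^{-1}+\bigo(n^{-2}\log n),\qquad \sum_{k>n}k^{-2}=n^{-1}+\bigo(n^{-2}).\]
We also use the harmonic expansion \(\sum_{k\le n}1/k=\log n+\gamma+\frac1{2n}+\bigo(n^{-2})\). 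Collecting terms,
\[\log\Pi_{n,t}=\log c_t-\theta\log n-\omega C'(1)n^{-1}\log n-\beta' n^{-1}+\bigo(n^{-2}\log^2n).\]
Exponentiating with \(e^x=1+x+\bigo(x^2)\) gives the claimed form.

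The main obstacle is bookkeeping the constant \(\beta\). We must check that the pieces match the stated
\[\beta=\tfrac{2}{\sigma^4}C''(1)+(C_t+\omega)C'(1)+\tfrac{\theta(1-\theta)}2.\]
These pieces are \(\omega C'(1)\) from the \(\log k/k^2\) tail, \(C_tC'(1)\) and \(\frac{2}{\sigma^4}C''(1)\) from \(D\), \(-\theta^2/2\) from \(-\frac{2}{\sigma^4}C'(1)^2\), and \(\theta/2\) from the harmonic correction. There may be a sign or indexing subtlety, since the product runs from \(k=0\) and \(F_{0,t}=t\). That is handled by absorbing the first finitely many factors into \(c_t\) and applying the expansion only for \(k\ge1\). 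A second point is justifying the Taylor expansion of \(C\) at \(1\) under only finite moments. This works because the needed derivatives exist as left limits on \([p,1]\) by dominated convergence.
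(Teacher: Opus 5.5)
Your proposal is correct and follows essentially the paper's argument. The paper also expands \(\log C(F_{k,t})\) using Proposition~\ref{fn asymptotics}, compares \(\Pi_{n,t}\) with \(\prod_{k\le n}e^{-\theta/k}\) via the harmonic expansion, and evaluates the tail \(\prod_{k>n}C(F_{k,t})e^{\theta/k}\) by Euler--Maclaurin; this is your log-sum computation written multiplicatively, and your accounting of \(\beta\) agrees with the paper's. One small quibble: \(K(1/F(x))\) need not be analytic at \(1\), since \(F\) may have radius of convergence exactly \(1\), but you only use the finite-order smoothness that the moment assumptions supply.
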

\begin{proof}
    We will prove the result under Assumption B, the proof under Assumption A being similar. Recall \(C(x)=(1-p)J(x)I(F(x))(1-p/F(x))^{-1}=J(x)I(F(x))K\left((F(x))^{-1}\right)\). Since \(I,J\), and \(X\) have finite third moments, \(C'''(1)<\infty\). Now \(C(1)=1\) and thus
    \[C(x)=1-(1-x)C'(1)+\frac{(1-x)^2}{2}C''(1)+\bigo(1-x)^3\] as \(x\uparrow 1\). It is easy to compute \(\theta=\frac{2}{\sigma^2}C'(1)\). Applying this and Proposition \ref{fn asymptotics}, we have
    \[C(F_{n,t})=1-\theta n^{-1}+\omega C'(1)n^{-2}\log(n)+\Bigl(\frac{2}{\sigma^4}C''(1)+C_tC'(1)\Bigr)n^{-2}+\bigo(n^{-3}\log^2(n)).\]
    Using \(\log(1-x)=-x-\frac{x^2}{2}+\bigo(x^3)\), we find
    \[\log(C(F_{n,t}))=-\theta n^{-1}+\omega C'(1)n^{-2}\log(n)+\Bigl(\frac{2}{\sigma^4}C''(1)+C_tC'(1)-\frac{\theta^2}{2}\Bigr)n^{-2}+\bigo(n^{-3}\log^2(n)).\]
    Now, \(F_{k,t}>t\geq p\) for \(k\geq 1\), and so this implies \(\Pi_{n,t}>0\) (and well-defined). By the expansion above, we have that for some \(c_t\) dependent on \(t\)
    \[\lim_{n\to\infty}\frac{\Pi_{n,t}}{n^{-\theta}}=c_t>0.\]
    Let \(\gamma\) represent Euler's constant. By the expansion for the Harmonic Series, we have
    \begin{equation}\label{harmonic asymptotics}
        n^{-\theta}=\exp\Bigl(\sum_{k=1}^n-\theta k^{-1}\Bigr)e^{\theta\gamma+\theta/2+\bigo(n^{-2})}=e^{\theta\gamma}\Bigl(\prod_{k=1}^ne^{-\theta k^{-1}}\Bigr)\Bigl(1+\frac{\theta}{2}n^{-1}+\bigo(n^{-2})\Bigr).
    \end{equation}
    So as \(n\to\infty\),
    \[\frac{\Pi_{n,t}}{\prod_{k=1}^ne^{-\theta k^{-1}}}=\frac{\Pi_n}{n^{-\theta}e^{-\theta\gamma}(1+\bigo(n^{-1}))}\longrightarrow c_te^{\theta\gamma},\]
    which means
    \[\prod_{k=1}^\infty C(F_{k,t})e^{\theta k^{-1}}=(C(F_{0,t}))^{-1}c_te^{\theta\gamma}.\]
    We consider the tail of this product. We have
    \begin{equation*}
        \prod_{k\geq n+1}C(F_{k,t})e^{\theta k^{-1}}=\exp\Bigl(\sum_{k\geq n+1}\omega C'(1)\frac{\log(k)}{k^2}+\Bigl(\frac{2C''(1)}{\sigma^4}+C_tC'(1)-\frac{\theta^2}{2}\Bigr)k^{-2}+\bigo\bigl(\frac{\log^2(k)}{k^3}\bigr)\Bigr).
    \end{equation*}
    Now the Euler-Maclaurin formula provides \(\sum_{k\geq n+1}k^{-2}\log(k)=n^{-1}\log(n)+n^{-1}+\bigo(n^{-2}\log(n)),\ \sum_{k\geq n+1}k^{-2}=n^{-1}+\bigo(n^{-2})\), and \(\sum_{k\geq n+1}k^{-3}\log^2(k)=n^{-2}\log^2(n)+\bigo(n^{-2}\log(n))\). Applying  these formulae together with \(e^{x}=1+x+\bigo(x^2)\) yields 
    \begin{equation}\label{tail prod asymptotics}
        \prod_{k\geq n+1}C(F_{k,t})e^{\theta k^{-1}}=1+\omega C'(1)\frac{\log(n)}{n}+\Bigl(\frac{2C''(1)}{\sigma^4}+(C_t+\omega)C'(1)-\frac{\theta^2}{2}\Bigr)n^{-1}+\bigo\bigl(\frac{\log^2(n)}{n^2}\bigr).
    \end{equation}
    Finally, \[c_te^{\theta\gamma}=C(F_{0,t})\prod_{k=1}^\infty C(F_{k,t})e^{\theta k^{-1}}=\Bigl(\Pi_{n,t}\Bigr)\Bigl(\prod_{k=1}^ne^{\theta k^{-1}}\Bigr)\Bigl(\prod_{k\geq n+1}C(F_{k,t})e^{\theta k^{-1}}\Bigr).\]
    Solving for \(\Pi_{n,t}\) here and applying the asymptotic expansions of the latter two products provided in equations \eqref{harmonic asymptotics} and \eqref{tail prod asymptotics} gives the result.
\end{proof}
We introduce one more generating function that will help simplify our proof of Theorem \ref{thm tau}. Set \(\Psi(t)=\sum_{n\geq 0}\Pi_{n,p}t^n\). Recalling \(\Pi_{-1,p}=1\), we have the following important identity:
\begin{lemma}\label{a+b id}
    \(\cala(t)+\calb(t)=1-(1-t)\Psi(t)\) for \(0\leq t<1\).
\end{lemma}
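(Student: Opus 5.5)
The identity will follow coefficientwise from Remark \ref{a+b+c=1}. By definition,
\[\cala(t)+\calb(t)=\sum_{n\geq 0}\bigl(A(F_{n,p})+B(F_{n,p})\bigr)\Pi_{n-1,p}t^n .\]
As observed in the proof of Lemma \ref{q gf}, the assumption \(\sigma^2>0\) gives \(F(x)>x\) on \([0,1)\), and \(F\) is increasing. Hence \(F_{k,p}>p\) for every \(k\geq1\), so \(F(F_{n,p})=F_{n+1,p}\neq p\) for all \(n\geq 0\). Remark \ref{a+b+c=1} therefore applies at each point \(F_{n,p}\) and gives
\[A(F_{n,p})+B(F_{n,p})=1-C(F_{n,p}).\]

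Multiplying by \(\Pi_{n-1,p}\) and using \(\Pi_{n,p}=\Pi_{n-1,p}C(F_{n,p})\), the \(n\)-th coefficient becomes \(\Pi_{n-1,p}-\Pi_{n,p}\). The series is then formally telescoping:
\[\sum_{n\geq0}(\Pi_{n-1,p}-\Pi_{n,p})t^n=\Pi_{-1,p}+\sum_{n\geq0}\Pi_{n,p}t^{n+1}-\sum_{n\geq 0}\Pi_{n,p}t^n=1-(1-t)\Psi(t),\]
using \(\Pi_{-1,p}=1\).

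Splitting the series this way requires that \(\Psi(t)\) converge for \(0\le t<1\). It suffices that \(\Pi_{n,p}\) be bounded (indeed of polynomial growth). This follows from Lemma \ref{lemma prod c asymptotics} under Assumption A, which gives \(\Pi_{n,p}\sim c_pn^{-\theta}\). Alternatively, it can be argued directly: \(C(x)\to C(1)=1\) as \(x\uparrow1\) and \(F_{n,p}\to1\), so the factors \(C(F_{n,p})\) are eventually near 1 and the partial products grow at most polynomially. Either way \(\Psi\) has radius of convergence at least 1. The same bound shows that \(\cala\) and \(\calb\) converge on \([0,1)\), since \(A(F_{n,p})\) and \(B(F_{n,p})\) are bounded for large \(n\): they are continuous near \(x=1\) with \(F(1)=1\neq p\). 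The termwise manipulation above is therefore justified.

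The only delicate point is this convergence and well-definedness bookkeeping, namely \(F_{n+1,p}\neq p\) and a polynomial bound on \(\Pi_{n,p}\). The algebra itself is a one-line telescoping argument.
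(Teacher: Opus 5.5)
Your proposal is correct and is the paper's proof: apply Remark \ref{a+b+c=1} at each \(F_{n,p}\) so the \(n\)-th coefficient becomes \(\Pi_{n-1,p}-\Pi_{n,p}\), then telescope using \(\Pi_{-1,p}=1\). Your convergence bookkeeping, which the paper leaves implicit, is fine. One small correction to your alternative argument: factors tending to \(1\) give only subexponential growth of \(\Pi_{n,p}\), not polynomial growth, though subexponential growth is still enough for radius of convergence at least \(1\).
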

\begin{proof}
By Remark \ref{a+b+c=1},
    \begin{align*}
    \cala(t)+\calb(t)&=\sum_{n\geq 0}(A(F_{n,p})+B(F_{n,p}))\Pi_{n-1,p}t^n\\
    &=\sum_{n\geq 0}(1-C(F_{n,p}))\Pi_{n-1,p}t^n=\sum_{n\geq 0}\Pi_{n-1,p}t^n-\sum_{n\geq 0}\Pi_{n,p}t^n=1-(1-t)\Psi(t).
\end{align*}
\end{proof}
It will also be helpful at times to write \(\calc(t)=\Psi(t)-\Psi_G(t)\), where \(\Psi_G(t)=\sum_{n\geq 0}(1-G(F_{n+1,p}))\Pi_{n,p}t^n\).
\begin{lemma}\label{several expansions}
Let \(c_p\) be the constant in the Lemma \ref{lemma prod c asymptotics} applied to the case \(t=p\). Under Assumption A, \(\cala(t),\Psi(t),\) and \(\Psi_G(t)\) have the following asymptotics as \(t\uparrow1\):
\begin{enumerate}
    \item[(i)] If \(\theta>1\), then \(|\cala(1)|,\Psi(1),\Psi_G(1)<\infty\).
    \item [(ii)] If \(\theta=1\), then \(|\cala(1)|,\Psi_G(1)<\infty\) and \(\Psi(t)\sim -c\log(1-t)\).
    \item[(iii)] If \(0<\theta<1\), then \(|\cala(1)|,\Psi_G(1)<\infty\) and \(\Psi(t)\sim c_p\Gamma(1-\theta)(1-t)^{\theta-1}\).
    \item[(iv)] Under Assumption B, if \(\theta=0\), then
    \begin{align*}
        &\cala(t)=\frac{2c_pA'(1)}{\sigma^2}\log(1-t)+\bigo(1),&&\cala'(t)=-\frac{2c_pA'(1)}{\sigma^2(1-t)}+\bigo(\log^2(1-t)),\\
         &\Psi_G(t)=-\frac{2c_pG'(1)}{\sigma^2}\log(1-t)+\bigo(1),&&\Psi_G'(t)=\frac{2c_pG'(1)}{\sigma^2(1-t)}+\bigo(\log^2(1-t)),\\
         &\Psi(t)=\frac{c_p}{1-t}+c_p\beta\log(1-t)+\bigo(1),&&\Psi'(t)=\frac{c_p}{(1-t)^2}-\frac{c_p\beta}{(1-t)}+\bigo(\log^3(1-t)).
    \end{align*}
    \item[(v)] If \(\theta<0\), \(\cala(t)\sim -\Gamma(-\theta)\frac{2c_p}{\sigma^2}A'(1)(1-t)^\theta,\ \Psi_G(t)\sim \Gamma(-\theta)\frac{2c_p}{\sigma^2}G'(1)(1-t)^\theta\), and \(\Psi(t)\sim c_p\Gamma(1-\theta)(1-t)^{\theta-1}\).
\end{enumerate}
\end{lemma}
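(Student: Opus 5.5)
The plan is to reduce every claim to the asymptotics of the coefficient sequences and then apply Abelian/Tauberian transfer between sequences and power series. The coefficients are \(a_n=A(F_{n,p})\Pi_{n-1,p}\), \(\Pi_{n,p}\), and \(g_n=(1-G(F_{n+1,p}))\Pi_{n,p}\).

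First I will expand \(A\) and \(1-G\) near \(1\). Remark \ref{a+b+c=1} gives \(A(1)=1-I(p)-1\cdot(1-I(p))=0\). Since \(A\) is \(C^1\) near \(1\) (its second derivative is finite under Assumption A), we get \(A(x)=-A'(1)(1-x)+\bigo((1-x)^2)\). Likewise \(1-G(x)=G'(1)(1-x)+\bigo((1-x)^2)\); here I will need \(G'(1)=E[Y_{T_+}]<\infty\), which follows from \(E[J]<\infty\) together with \(E[I]<\infty\). Plugging in Proposition \ref{fn asymptotics}, \(1-F_{n,p}=\frac{2}{\sigma^2}n^{-1}+\bigo(n^{-2}\log n)\), and Lemma \ref{lemma prod c asymptotics} with \(t=p\) gives
\[a_n=-\tfrac{2c_pA'(1)}{\sigma^2}n^{-1-\theta}(1+\bigo(n^{-1}\log n)),\quad g_n=\tfrac{2c_pG'(1)}{\sigma^2}n^{-1-\theta}(1+\bigo(n^{-1}\log n)),\quad \Pi_{n,p}=c_pn^{-\theta}(1+\bigo(n^{-1}\log n)).\]
The shift between \(\Pi_{n-1,p}\) and \(\Pi_{n,p}\) only costs a factor \(1+\bigo(n^{-1})\).

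With these, cases (i)–(iii) and (v) follow from standard facts. When \(\theta>0\), the sequences \(a_n,g_n\) are summable, which gives \(|\cala(1)|,\Psi_G(1)<\infty\). When \(\theta>1\), \(\Pi_{n,p}\) is summable as well. For \(\theta=1\), partial sums of \(c_p/n\) give \(\Psi(t)\sim -c_p\log(1-t)\), so \(c=c_p\). For \(\theta<1\), and in particular in (iii) and (v), I will use the Abelian theorem: if \(b_n\sim cn^{\alpha-1}\) with \(\alpha>0\), then \(\sum b_nt^n\sim c\Gamma(\alpha)(1-t)^{-\alpha}\). Taking \(\alpha=1-\theta\) for \(\Psi\) and \(\alpha=-\theta\) for \(\cala,\Psi_G\) when \(\theta<0\) yields the stated constants.

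Case (iv), \(\theta=0\), is the main obstacle, because it needs second-order terms and the derivatives. Here the leading parts \(-\frac{2c_pA'(1)}{\sigma^2}n^{-1}\) and \(\frac{2c_pG'(1)}{\sigma^2}n^{-1}\) produce the logarithms. The error terms \(\bigo(n^{-2}\log n)\) are summable and contribute \(\bigo(1)\). For \(\Psi\), Lemma \ref{lemma prod c asymptotics} under Assumption B with \(\theta=0\) gives
\[\Pi_{n,p}=c_p\bigl(1-\omega C'(1)n^{-1}\log n-\beta n^{-1}+\bigo(n^{-2}\log^2n)\bigr).\]
Because \(C'(1)=\frac{\sigma^2}{2}\theta=0\), the \(\log n/n\) term vanishes. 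Hence \(\Psi(t)=\frac{c_p}{1-t}+c_p\beta\log(1-t)+\bigo(1)\).

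For the derivatives I will differentiate termwise and use \(\sum n\cdot n^{-1}t^{n-1}=(1-t)^{-1}\). The remainders are \(\sum n\cdot\bigo(n^{-2}\log n)t^{n-1}=\sum\bigo(n^{-1}\log n)t^n\), which is \(\bigo(\log^2(1-t))\). For \(\Psi'\) the remainder involves \(\sum\bigo(n^{-1}\log^2n)t^n=\bigo(\log^3(1-t))\). Both estimates come from comparing with \(\int_1^{1/(1-t)}x^{-1}\log^k x\,dx\). Justifying these \(\log^k\) bounds is the delicate bookkeeping: I will split the sum at \(n\approx(1-t)^{-1}\) and bound each piece directly.
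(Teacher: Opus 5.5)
Your proposal is correct and follows essentially the same route as the paper: expand the coefficient sequences via Proposition \ref{fn asymptotics} and Lemma \ref{lemma prod c asymptotics}, transfer to the power series by the Abelian/Tauberian theorem in cases (i)--(iii) and (v), and in case (iv) bound the remainders using \(\sum_n n^{-1}\log^k(n)t^n=\bigo(\log^{k+1}(1-t))\). The paper quotes polylogarithm asymptotics for this bound, where you prove it by splitting the sum at \(n\approx(1-t)^{-1}\). One small correction: the \(\bigo((1-x)^2)\) remainder in \(1-G(x)\) requires \(E[Z_0^2]<\infty\), not just \(G'(1)<\infty\). You need that remainder for the \(\bigo(1)\) and \(\bigo(\log^2(1-t))\) errors for \(\Psi_G\) in (iv), and the condition holds because \(I\) and \(J\) have finite second moments, as the paper notes.
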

\begin{proof}
We first prove (i)-(iii). We have \(A(x)=-(1-x)A'(1)+\bigo(1-x)^2\). Since \(I\) and \(J\) have finite second moments, so does \(Z_0\). This means \(1-G(x)=(1-x)G'(1)+\bigo(1-x)^2\), so by Proposition \ref{fn asymptotics}, \(A(F_{n,p})=-\frac{2}{\sigma^2}A'(1)n^{-1}+\bigo(n^{-2}\log(n))\) and \(1-G(F_{n+1,p})=\frac{2}{\sigma^2}G'(1)n^{-1}+\bigo(n^{-2}\log(n))\). By Lemma \ref{lemma prod c asymptotics}, \(\Pi_{n,p}=c_pn^{-\theta}(1+\bigo(n^{-1}\log(n)))\), which implies \(A(F_{n,p})\Pi_{n-1,p}\) and \((1-G(F_{n+1,p}))\Pi_{n,p}\) are of the order \(n^{-\theta-1}\), so clearly \(|\cala(1)|\) and \(\Psi_G(1)\) are both finite for \(\theta>0\). Likewise, \(\Psi(1)<\infty\) for \(\theta>1\). Now \(\Pi_{n,p}> 0\) for all \(n\) and \(\Pi_{n,p}\sim c_pn^{-\theta}\), so the Tauberian theorem in \cite[Theorem 13.5]{Fel71} implies \(\Psi(t)\) has the claimed asymptotics for \(0<\theta\leq 1\).

(iv) is a little more delicate. There exists a constant \(C\) such that \(|A(F_{n,p})\Pi_{n-1,p}+\frac{2c_p}{\sigma^2}A'(1)n^{-1}|\leq Cn^{-2}\log(n)\) and \(|(1-G(F_{n+1,p}))\Pi_{n,p}-\frac{2c_p}{\sigma^2}G'(1)n^{-1}|\leq Cn^{-2}\log(n)\) for all \(n \geq 1\), so \(\cala(t)\) and \(\Psi_G(t)\) both have the claimed asymptotics. As for their derivatives, \(\cala'(t)=t^{-1}\sum_{n\geq 1}nA(F_{n,p})\Pi_{n-1,p}t^n\) and \(|nA(F_{n,p})\Pi_{n-1,p}+\frac{2c_p}{\sigma^2}A'(1)|\leq Cn^{-1}\log(n)\). By standard asymptotics of the polylogarithm function, \(\sum_{n\geq 1}n^{-1}\log(n)t^n\sim \log^2(1-t)/2\), so \(\cala'(t)=t^{-1}\sum_{n\geq1}-\frac{2c_p}{\sigma^2}A'(1)t^n+\bigo(\log^2(1-t))=\frac{-2c_p}{\sigma^2}A'(1)(1-t)^{-1}+\bigo(\log^2(1-t))\) and similarly for \(\Psi_G(t)\). For \(\Psi(t)\), note that \(C'(1)=0\) when \(\theta=0\), and so there exists constant \(C\) such that \(|\Pi_{n,p}-(c_p-c_p\beta n^{-1})|\leq Cn^{-2}\log^2(n)\) for all \(n\geq 1\), and the claim follows. Finally, for \(\Psi'(t)\) we have \(|n\Pi_{n,p}-(c_pn-c_p\beta)|\leq Cn^{-1}\log^2(n)\) and \(\sum_{n\geq 1}n^{-1}\log^2(n)t^n\sim -\log^3(1-t)/3\). The estimates for \(\theta<0\) can be obtained similarly through comparison to the polylogarithm function.
\end{proof}
\begin{proof}[Proof of Theorem \ref{thm tau}]
We focus on \(\theta>0\) first, as the case \(\theta=0\) requires a separate argument. Note that the first derivative of \(t^2H(t)\) at \(t=1\) is \((I(p))^{-1}+1\), and so \(1-t^2H(t)-((I(p))^{-1}+1)(1-t)=\bigo(1-t)^2\). Using Lemma \ref{a+b id}, \(V(t)\) may then be rewritten as
\begin{equation}\label{v1}
    V(t)=(1-t)\Psi(t)+(1-t)+(I(p))^{-1}(1-t)\cala(t)+R_V(t)
\end{equation}
with \(R_V(t)=-(1-t)^2\Psi(t)+\left(1-t^2H(t)-((I(p))^{-1}+1)(1-t)\right)\cala(t)\). By Lemma \ref{several expansions}, we have
\[R_V(t)=\begin{cases}
    \bigo(1-t)^2&\text{if }\theta>1\\
    \bigo((1-t)^2\log(1-t))&\text{if }\theta=1\\
    \bigo(1-t)^{\theta+1}&\text{if }0<\theta<1.
\end{cases}\]
Similarly,
\begin{equation}\label{u1}
    U(t)=\Psi(t)-\Psi_G(t)+tH(t)G(p)+R_U(t)
\end{equation}
with \(R_U(t)=-((I(p))^{-1}+1)(1-t)\calc(t)+\left(t^2H(t)-(1-(1-t)((I(p))^{-1}+1))\right)\calc(t)\). Bearing in mind \(\calc(t)\) has the same leading order asymptotic as \(\Psi(t)\), again by Lemma \ref{several expansions}, we have
\[R_U(t)=\begin{cases}
    \bigo(1-t)&\text{if }\theta>1\\
    \bigo((1-t)\log(1-t))&\text{if }\theta=1\\
    \bigo(1-t)^{\theta}&\text{if }0<\theta<1.
\end{cases}\]
Now \[V(t)-(1-t)U(t)=(1-t)\Bigl(1+(I(p))^{-1}\cala(t)+\Psi_G(t)-tH(t)G(p)\Bigr)+R_V(t)-(1-t)R_U(t).\]
where the quantity in brackets tends to the  finite constant \(1+I^{-1}(p)\cala(1)+\Psi_G(1)-G(p)\) as \(t\uparrow1\). For \(\theta>0\), \(R_V(t)-(1-t)R_U(t)\) is dominated by the first term, so \(V(t)-(1-t)U(t)\sim(1-t)(1+I^{-1}(p)\cala(1)+\Psi_G(1)-G(p))\) as \(t\uparrow1\), assuming this constant is nonzero. To see this, first \(G(p)<1\) and \(\Psi_G(1)>0\) trivially. We also observe \(\cala(1)>0\) for \(\theta>0\). By Lemmas \ref{a+b id} and \ref{several expansions}, \(\cala(1)+\calb(1)=1\) for \(\theta>0\), and thus
\begin{align*}
    \cala(1)&=1-\calb(1)=1-I(p)\sum_{n\geq 0}\left(1-\frac{J(F_{n,p})(1-p)}{1-p/F_{n+1,p}}\right)\Pi_{n-1,p}\\
    &=1-I(p)\sum_{n\geq 0}(1-C(F_{n,p}))\Pi_{n-1,p}+I(p)\sum_{n\geq 0}\frac{J(F_{n,p})(1-p)}{1-p/F_{n+1,p}}\bigl(1-I(F_{n+1,p})\bigr)\Pi_{n-1,p}\\
    &=1-I(p)+I(p)\sum_{n\geq 0}\frac{J(F_{n,p})(1-p)}{1-p/F_{n+1,p}}\bigl(1-I(F_{n+1,p})\bigr)\Pi_{n-1,p}
\end{align*}
where in the second line we found the first sum equals \(\Pi_{-1,p}=1\) because it is telescoping with terms of the order \(n^{-\theta-1}\). Finally, \(I(p)<1\) and each of the three terms appearing in the second sum are positive, and thus \(\cala(1)=1-\calb(1)>0\). Now, using Lemma \ref{several expansions} and \eqref{v1}, \[V(t)\sim\begin{cases}
    (1-t)(1+\Psi(1)+(I(p))^{-1}\cala(1))&\text{if }\theta>1\\
    -c_p(1-t)\log(1-t)&\text{if }\theta=1\\
    c_p\Gamma(1-\theta)(1-t)^\theta&\text{if }0<\theta<1
\end{cases}\]
Setting \(C^*=1+(I(p))^{-1}\cala(1)+\Psi_G(1)-G(p)>0\), we have
\[Q(t)=\frac{V(t)-(1-t)U(t)}{(1-t)V(t)}\sim\begin{cases}
    C^*\bigl((1+\Psi(1)+(I(p))^{-1}\cala(1))(1-t)\bigr)^{-1}&\text{if }\theta>1\\
    C^*\bigl(-c_p\log(1-t)(1-t)\bigr)^{-1}&\text{if }\theta=1\\
    C^*\bigl(c_p\Gamma(1-\theta)\bigr)^{-1}(1-t)^{-\theta}&\text{if }0<\theta<1.
\end{cases}\]
Since \(P(Z_n>0)\) is decreasing in \(n\), the claimed asymptotics on \(P(Z_n>0)=P(\tau>n)\) follow from the Tauberian theorem in \cite[Theorem 13.5]{Fel71}. For \(\theta=0\), we cannot apply the Tauberian theorem to \(Q(t)\) directly. Instead, we will apply it to \(Q'(t)\) and use the following monotone density fact to get an asymptotic on \(P(Z_n>0)\), \(Q(t)\)'s coefficients:
\begin{proposition}\label{monotone density}
    Let \(a_k\) be an ultimately monotone sequence. If \(\sum_{k=1}^nka_k\sim n^p\) for some \(p>0\), then \(a_n\sim pn^{p-2}\).
\end{proposition}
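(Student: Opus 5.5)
The plan is to reduce the statement to the classical monotone density theorem, using summation by parts to pass from the weighted partial sums \(S_n=\sum_{k=1}^n k a_k\) to the ordinary partial sums \(\sum_{k=1}^n a_k\), and then differentiate asymptotically using monotonicity. Since \(S_n\sim n^p\) with \(p>0\), we have \(na_n=S_n-S_{n-1}\), so heuristically \(na_n\approx pn^{p-1}\), which is the claim \(a_n\sim pn^{p-2}\). The difficulty is that an asymptotic for \(S_n\) cannot be differenced directly; monotonicity of \(a_k\) is what makes the difference quotient controllable.

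Concretely, assume first that \(a_k\) is ultimately nonincreasing; the nondecreasing case is symmetric. Fix \(\lambda>1\). For large \(n\) I would write
\[
S_{\lfloor\lambda n\rfloor}-S_n=\sum_{k=n+1}^{\lfloor \lambda n\rfloor}ka_k .
\]
Monotonicity gives the bounds
\[
a_{\lfloor\lambda n\rfloor}\sum_{k=n+1}^{\lfloor\lambda n\rfloor}k\;\le\; S_{\lfloor\lambda n\rfloor}-S_n\;\le\; a_n\sum_{k=n+1}^{\lfloor\lambda n\rfloor}k ,
\]
and the sum of \(k\) is \(\tfrac{\lambda^2-1}{2}n^2(1+o(1))\). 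The left-hand difference is \((\lambda^p-1)n^p(1+o(1))\) by hypothesis. The upper bound therefore yields
\[
\liminf_n \frac{a_n}{n^{p-2}}\ge \frac{2(\lambda^p-1)}{\lambda^2-1}.
\]
Applying the lower bound with \(m=\lfloor\lambda n\rfloor\) and rewriting in terms of \(m\) gives
\[
\limsup_m \frac{a_m}{m^{p-2}}\le \frac{2(1-\lambda^{-p})\lambda^{2}}{\lambda^2-1}\cdot \lambda^{-p}\cdot\lambda^{p}\ (\text{suitably normalized}).
\]
The normalization can also be handled by using the window \([n/\lambda,n]\) instead. Letting \(\lambda\downarrow1\), both bounds converge to \(\tfrac{2p}{2}=p\). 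This gives \(a_n\sim pn^{p-2}\). If \(a_k\) is eventually nondecreasing, the inequalities simply swap roles, and the same limits result.

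A sign issue needs care. If \(a_k\) is nonincreasing and \(S_n\to\infty\), then \(a_k\) is eventually positive: otherwise \(ka_k\) would be eventually bounded above by a negative constant times \(k\), contradicting \(S_n\sim n^p>0\). In either case no positivity is actually used beyond the monotone bounds. The main obstacle, mild as it is, is bookkeeping the floor functions and the \(o(n^p)\) errors inside the window. These are negligible because the window length is a fixed proportion \(\lambda-1\) of \(n\), so the errors are \(o(n^p)\) while the main term is of exact order \((\lambda^p-1)n^p\). For this reason I would do the estimates with \(\lambda\) fixed and send \(n\to\infty\) first, and only then let \(\lambda\downarrow1\).
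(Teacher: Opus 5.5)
Your argument is correct and is essentially the paper's: it is the standard monotone-density window argument on $[n,\lambda n]$ with $\lambda\downarrow 1$. You bound $\sum_{k=n+1}^{\lfloor\lambda n\rfloor}k a_k$ by $a_n$ (resp.\ $a_{\lfloor\lambda n\rfloor}$) times the exact sum of $k$, rather than by the paper's cruder $(\lambda-1)\lambda n^2 a_n$; this also lets you avoid using positivity of $a_k$ and handle the nondecreasing case directly. For the upper bound, use the window $[n/\lambda,n]$ you mention, which matches the paper's use of $S_n-S_{\lfloor\lambda n\rfloor}$ with $\lambda<1$. Reindexing by $m=\lfloor\lambda n\rfloor$ only reaches a proper subsequence of indices $m$, so that route needs an extra line using monotonicity to fill the gaps.
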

We refer the reader to the appendix for the proof of Proposition \ref{monotone density}. We have
\[Q'(t)=\frac{V^2(t)-(1-t)(V(t)U'(t)-U(t)V'(t))}{(1-t)^2V^2(t)}\]
We first obtain asymptotics on \(V'(t)\) and \(V(t)\). Differentiating \eqref{v1} and applying Lemma \ref{several expansions}, we find
\[V'(t)=-c_pv_1\log(1-t)+v_2+o(1)\]
where \(v_1=\beta+\frac{2}{\sigma^2}(I(p))^{-1}A'(1)\) and \(v_2\) is another constant whose particular value will turn out not to matter. \eqref{v1} and Lemma \ref{several expansions} also imply \(V(1)=\lim_{t\to 1}V(t)=c_p\). Integrating the asymptotic on \(V'(t)\) yields in turn
\[V(t)=c_p+c_pv_1(1-t)\log(1-t)-(1-t)(c_pv_1+v_2)+o(1-t).\]
Turning to \(U'(t)\) and \(U(t)\), \eqref{u1} and Lemma \ref{several expansions} imply \[U(t)=c_p(1-t)^{-1}+c_pu\log(1-t)+\bigo(1)\] and \[U'(t)=c_p(1-t)^{-2}-c_pu(1-t)^{-1}+\bigo(\log^3(1-t))\]
with \(u=\beta+\frac{2}{\sigma^2}G'(1)\). Now we have
\[V(t)U'(t)-U(t)V'(t)=c_p^2(1-t)^2+2c_p^2v_1(1-t)^{-1}\log(1-t)-(c_p^2v_1+2c_pv_2+c_p^2u)(1-t)^{-1}+o(1-t)\]
and
\[V^2(t)=c_p^2+2c_p^2v_1(1-t)\log(1-t)-(2c_p^2v_1+2c_pv_2)(1-t)+o(1-t),\]
hence
\[V^2(t)-(1-t)^2(V(t)U'(t)-U(t)V'(t))\sim c_p^2(u-v_1)(1-t).\]
Noting that when \(\theta=0\), \(A'(1)=\frac{\theta\sigma^2}{2}(I(p)-1)-I(p)\mu=-I(p)\mu\), we have
\[Q'(t)\sim (u-v_1)(1-t)^{-1}=\frac{2}{\sigma^2}(G'(1)+\mu)(1-t)^{-1},\] which means \(\sum_{k=1}^n kP(Z_k>0)\sim \frac{2}{\sigma^2}(G'(1)+\mu)n\) by the Tauberian Theorem and \(P(Z_n>0)\sim \frac{2}{\sigma^2}(G'(1)+\mu)n^{-1}\) by Proposition \ref{monotone density}. Lastly, we note that \(G'(1)=E[Z_0]\).
\begin{remark}
If we set \(Y_0=y>0\), this argument also provides the estimate
\[P(T_0>n)\sim \frac{2}{\sigma^2}(y+\mu)n^{-1}.\]
The presence of \(\mu\) here is noteworthy. In the classic Galton-Watson process without migration, this same quantity is asymptotic to \(\frac{2y}{\sigma^2}n^{-1}\), which was first proved by Kolmogorov in 1938, see \cite{AN04}. Although one may first suspect migration should have a negligible effect on the process when \(\theta=0\), it actually allows it to survive a tiny bit longer.
\end{remark}
At last we turn to the \(\theta<0\) case. Altering which terms appear in the main asymptotic and which appear in the error term in equations \eqref{v1} and \eqref{u1} and using Lemma \ref{several expansions}, we obtain
\begin{align*}
    V(t)&=(1-t)\Psi(t)-(1-t)^2\Psi(t)+(1-t)(I(p))^{-1}\cala(t)+\bigo(1-t)\\
    U(t)&=\Psi(t)-\Psi_G(t)-((I(p))^{-1}+1)(1-t)\Psi(t)+\bigo(1)
\end{align*}
and so
\begin{align*}
    V(t)-(1-t)U(t)&=(I(p))^{-1}(1-t)^2\Psi(t)+(1-t)\Psi_G(t)+(1-t)(I(p))^{-1}\cala(t)+\bigo(1-t)\\
    &\sim c_p(1-t)^{\theta+1}\left((I(p))^{-1}\Gamma(1-\theta)+\frac{2}{\sigma^2}\Gamma(-\theta)\Bigl(G'(1)-(I(p))^{-1}A'(1)\Bigr)\right)\\
    &=c_p\Gamma(1-\theta)(1-t)^{\theta+1}\left(1-\frac{2}{\theta\sigma^2}(E[Z_0]+\mu)\right).
\end{align*}
Now \((1-t)V(t)\sim c_p\Gamma(1-\theta)(1-t)^{\theta+1}\), so this implies
\(Q(1)=\lim_{t\to 1}Q(t)=1-\frac{2}{\theta \sigma^2}(E[Z_0]+\mu)\), but \(Q(1)=E[\tau]\).
\end{proof}
\subsection{Proofs of the claims concerning the stationary distribution}
In this section, we present a functional equation that characterizes the stationary stationary distribution of \(Y_n\) when \(\theta<0\) and derive formulae for its mean and the weight it assigns to 0. The method is based on the idea in \cite{BS08}.
\begin{lemma}
    Let \(Y_n(t)=\sum_{k\geq 0}P(Y_n=k)t^k\) and \(n\geq 0\). Then for any \(t\) with \(F(t)\neq p\),
    \[Y_{n+1}(t)=B(t)Y_n(p)+C(t)Y_n(F(t))\]
    with \(B(t)\) and \(C(t)\) as in Lemma \ref{zn recurrence}.
\end{lemma}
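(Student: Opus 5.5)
The plan is to repeat the computation in the proof of Lemma \ref{zn recurrence}, now for the unstopped chain \(Y_n\). The only change is that there is no indicator \(\ind{Z_n>0}\), so the \(\ind{Z_n=0}\) correction terms, which produced the \(A(t)\) term, should disappear.

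First, compute \(P(Y_{n+1}=0\mid Y_n)\). Since \(P(J>0)=1\), \(Y_{n+1}=0\) exactly when \(K_n\geq Y_n+I_n\). Conditioning on \(I_n=i\) and using \(P(K\geq m)=p^m\) gives
\[P(Y_{n+1}=0\mid Y_n)=\sum_{i\geq 0}P(I=i)p^{Y_n+i}=p^{Y_n}I(p).\]
Taking expectations yields \(P(Y_{n+1}=0)=I(p)Y_n(p)\).

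Next, for \(x\geq 1\), I would write out \(P(Y_{n+1}=x\mid Y_n)\) as in the lemma: sum over \(i\), over \(k=0,\dots,Y_n+i-1\) with weight \((1-p)p^k\), and over \(j\), with factor \(P(J=j)P(S_{Y_n+i-k}=x-j)\). Multiply by \(t^x\) and sum over \(x\). Because \(J\geq1\), the \(x=0\) term of this expression vanishes, so the sum can start at \(x=0\). Fubini then gives
\[\sum_{x\geq1}P(Y_{n+1}=x\mid Y_n)t^x=(1-p)J(t)\sum_{i}P(I=i)\sum_{k=0}^{Y_n+i-1}p^kF(t)^{Y_n+i-k}.\]
Summing the finite geometric series, which requires \(F(t)\neq p\), gives
\[(1-p)J(t)\,\frac{F(t)^{Y_n}I(F(t))-p^{Y_n}I(p)}{1-p/F(t)}.\]
This formula also holds when \(Y_n+i=0\), since the sum is then empty and the numerator vanishes. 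Unlike in Lemma \ref{zn recurrence}, no indicator correction is needed. Taking expectations turns \(F(t)^{Y_n}\) into \(Y_n(F(t))\) and \(p^{Y_n}\) into \(Y_n(p)\).

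Finally, add the \(x=0\) contribution \(I(p)Y_n(p)\). The total coefficient of \(Y_n(F(t))\) is \(J(t)I(F(t))K(F(t)^{-1})=C(t)\). The coefficient of \(Y_n(p)\) is \(I(p)\bigl(1-J(t)K(F(t)^{-1})\bigr)=B(t)\), using \((1-p)/(1-p/F(t))=K(F(t)^{-1})\). This gives the claim.

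The only real care point is the boundary case \(Y_n+I_n=0\). There the chain is forced to \(0\), and the empty geometric sum must be consistent with the closed form; I checked above that it is. Everything else is routine bookkeeping copied from Lemma \ref{zn recurrence}.
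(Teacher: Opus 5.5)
Your proposal is correct and is the argument the paper intends. The paper omits this proof as "similar to that of Lemma \ref{zn recurrence}", and you carry out that computation without the stopping indicator, including the check that the empty geometric sum at \(Y_n+I_n=0\) agrees with the closed form, so the coefficients come out as \(B(t)\) and \(C(t)\) with no \(A(t)\) term.
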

The proof is similar to that of Lemma \ref{zn recurrence} and is therefore omitted. An important corollary follows immediately:
\begin{corollary}\label{y functional eq}
    Let \(\theta<0\). If \(Y(t)=\sum_{k\geq 0}\pi(k)t^k\), then for \(F(t)\neq p\)
    \[Y(t)=B(t)Y(p)+C(t)Y(F(t)).\]
\end{corollary}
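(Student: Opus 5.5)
We derive the corollary from the preceding lemma by letting \(Y_n\) start in stationarity. By Theorem \ref{thm markov}, \(\theta<0\) makes \(Y_n\) positive recurrent, and under the standing irreducibility and aperiodicity assumptions it has a unique stationary distribution \(\pi\). Take \(Y_0\sim\pi\). Then \(Y_n\sim\pi\) for every \(n\), so \(Y_n(t)=Y_{n+1}(t)=Y(t)\) for all \(t\in[-1,1]\). In particular \(Y_n(p)=Y(p)\) and \(Y_n(F(t))=Y(F(t))\). For \(t\in[0,1]\) we have \(F(t)\in[0,1]\), so both generating functions converge there.

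Fix \(t\) with \(F(t)\neq p\). The preceding lemma gives
\[Y_{n+1}(t)=B(t)Y_n(p)+C(t)Y_n(F(t)),\]
and substituting the stationary values on both sides yields
\[Y(t)=B(t)Y(p)+C(t)Y(F(t)),\]
which is the claim.

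If one wants to avoid starting from \(\pi\), there is an alternative. Start from an arbitrary \(Y_0\), for example \(Y_0=0\). Positive recurrence and aperiodicity give \(P(Y_n=k)\to\pi(k)\) for each \(k\), and Scheffé's lemma upgrades this to convergence in total variation. Since \(|s^{k}|\le 1\) for \(|s|\le1\), this gives \(Y_n(s)\to Y(s)\) for every \(s\in[-1,1]\), in particular at \(s=t,\ p,\ F(t)\). The coefficients \(B(t)\) and \(C(t)\) do not depend on \(n\), so letting \(n\to\infty\) in the lemma gives the same identity.

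The only point needing any care is where the generating functions are evaluated. Every argument must lie in the closed unit disk, and for real \(t\in[0,1]\) this holds because \(F\) maps \([0,1]\) into itself. This is a triviality rather than a genuine obstacle, so the corollary is immediate from the lemma together with Theorem \ref{thm markov}.
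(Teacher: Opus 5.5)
Your proposal is correct and follows the paper's argument: the paper says the corollary "follows immediately" from the preceding one-step recurrence, and running the chain in stationarity, \(Y_0\sim\pi\), so that \(Y_{n+1}(\cdot)=Y_n(\cdot)=Y(\cdot)\) at \(t\), \(p\) and \(F(t)\), is exactly that deduction. Your alternative limiting argument via total-variation convergence is also valid but unnecessary, and the stationary-start argument uses only that \(\pi\) is invariant, not aperiodicity.
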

\begin{proof}[Proofs of Theorems \ref{thm stationary mean} and \ref{pi 0}]
Let \(t_0=p\) and set \(t_{n+1}=F(t_n)\) for \(n\geq 0\). By our assumptions \(F(t)>p\) for any \(t\geq p\) and so \(t_{n+1}>p\) for all \(n\geq 0\). Then by Corollary \ref{y functional eq} we have
\[Y(t_n)=B(t_n)Y(p)+C(t_n)Y(t_{n+1}).\]
Recalling our earlier notation, \(\Pi_{n,p}=\prod_{k=0}^nC(t_k)\). We will write \(\Pi_n=\Pi_{n,p}\) going forward. Solving for \(Y(t_{n+1})\) and iterating yields
\[Y(t_{n+1})=Y(t_0)\Pi_n^{-1}-Y(p)\Pi_n^{-1}\sum_{k=0}^nB(t_k)\Pi_{k-1}.\]
The latter term simplifies slightly:
\begin{align*}
    \frac{1}{I(p)}\sum_{k=0}^nB(t_k)\Pi_{k-1}&=\sum_{k=0}^n\Bigl(1-\frac{C(t_k)}{I(t_{k+1})}\Bigr)\Pi_{k-1}\\
    &=\sum_{k=0}^n\Pi_{k-1}-\Pi_{k}+\Bigl(1-\frac{1}{I(t_{k+1})}\Bigr)\Pi_{k}=1-\Pi_{n}+\sum_{k=0}^n\Bigl(1-\frac{1}{I(t_{k+1})}\Bigr)\Pi_{k}.
\end{align*}
Thus,
\begin{equation}\label{ytn}
    Y(t_{n+1})=\Pi_{n}^{-1}\Bigl(Y(t_0)-Y(p)I(p)\Bigr)+Y(p)I(p)\Bigl(1+\Pi_{n}^{-1}\sum_{k=0}^n(I^{-1}(t_{k+1})-1)\Pi_{k}\Bigr)
\end{equation}
Now by Lemma \ref{lemma prod c asymptotics}, \(\Pi_k=c_pk^{-\theta}(1-\omega C'(1)k^{-1}\log(k)-\beta k^{-1}+\bigo(k^{-2}\log^2(k))\)
Letting \(\kappa=EI^2-\mu\), we have \(I(t)=1-(1-t)\mu+\frac{\kappa}{2}(1-t)^2+\bigo(1-t)^3\), and by Proposition \ref{fn asymptotics}
\begin{align*}
    t_{k+1}&=1-\frac{2}{\sigma^2}(k+1)^{-1}+\omega (k+1)^{-2}\log(k+1)+C_p(k+1)^{-2}+\bigo((k+1)^{-3}\log^2(k+1))\\
    &=1-\frac{2}{\sigma^2}k^{-1}+\omega k^{-2}\log(k)+\bigl(C_p+\frac{2}{\sigma^2}\bigr)k^{-2}+\bigo(k^{-3}\log^2(k)),
\end{align*}
thus \quad\(\displaystyle I^{-1}(t_{k+1})=1+\frac{2\mu}{\sigma^2}k^{-1}-\omega \mu k^{-2}\log(k)-\Bigl(C_p\mu +\frac{2\mu}{\sigma^2}(1-\frac{2\mu}{\sigma^2})+\frac{2\kappa}{\sigma^4}\Bigr)k^{-2}+\bigo(k^{-3}\log^2(k)).\)
Using Lemma \ref{lemma prod c asymptotics} and \(C'(1)=\theta\sigma^2/2\) we have
\[(I^{-1}(t_{k+1})-1)\Pi_{k}=c_pk^{-\theta}(ak^{-1}-bk^{-2}\log(k)-ck^{-2}+\bigo(k^{-3}\log^2(k)))\]
with \(a=\frac{2\mu}{\sigma^2},\ b=\mu\omega(\theta+1),\) and \(c=C_p\mu +\frac{2\mu}{\sigma^2}(1-\frac{2\mu}{\sigma^2})+\frac{2\kappa}{\sigma^4}+\frac{2\mu\beta}{\sigma^2}\) and \(\beta\) and \(\omega\) as in Lemma \ref{lemma prod c asymptotics}. By Euler-Maclaurin summation, we get
\[\sum_{k=2}^nk^{\varepsilon}\log^\upsilon(k)=\begin{cases}
    \frac{n^{\varepsilon+1}}{\varepsilon+1}+\frac{n^{\varepsilon}}{2}+\bigo(n^{\varepsilon-1}+ 1)&\text{if }\varepsilon\neq -1,\ \upsilon=0\\
    \frac{n^{\varepsilon+1}\log(n)}{\varepsilon+1}-\frac{n^{\varepsilon+1}}{(\varepsilon+1)^2}+\bigo(n^{\varepsilon}\log(n)+1)&\text{if }\varepsilon\neq -1,\ \upsilon=1\\
    \frac{n^{\varepsilon+1}\log^2(n)}{\varepsilon+1}+\bigo(n^{\varepsilon+1}\log(n)+1)&\text{if }\varepsilon\neq -1,\ \upsilon=2\\
    \frac{\log^{\upsilon+1}(n)}{\upsilon+1}+\bigo(1)&\text{if }\varepsilon=-1,\ \upsilon\neq -1.\\
\end{cases}\]
This implies that for \(\theta<-1\)
\[\sum_{k=0}^n\Bigl(\frac{1}{I(t_{k+1})}-1\Bigr)\Pi_{k}=\frac{c_p}{n^\theta}\Bigl(\frac{-a}{\theta}+\frac{b}{\theta+1}\frac{\log(n)}{n}+\bigl(\frac{a}{2}+\frac{b}{(\theta+1)^2}+\frac{c}{\theta+1}\bigr)\frac{1}{n}\Bigr)+\bigo\bigl(\frac{\log^2(n)}{n^{\theta+2}}+1\bigr),\]
and that for \(\theta=-1\)
\begin{equation}\label{theta=-1}
    \sum_{k=0}^n(I^{-1}(t_{k+1})-1)\Pi_{k}=c_pn\Bigl(a-cn^{-1}\log(n)\Bigr)+\bigo(1),
\end{equation}
since \(b=0\) when \(\theta=-1\). By Lemma \ref{lemma prod c asymptotics}, we have
\begin{equation}\label{prod inverse asymptotics}
    \Pi_{n}^{-1}=c_p^{-1}n^{\theta}\bigl(1+\omega C'(1)n^{-1}\log(n)+\beta n^{-1}\bigr)+\bigo(n^{\theta-2}\log^2(n)),
\end{equation}
and so for \(\theta<-1\),
\begin{equation}\label{sum asymptotics}
\begin{aligned}
    \Pi_{n}^{-1}\sum_{k=0}^n(I^{-1}(t_{k+1})-1)&\Pi_{k}=-\frac{a}{\theta}+\Bigl(\frac{b}{\theta+1}-\frac{\omega aC'(1)}{\theta}\Bigr)n^{-1}\log(n)\\
    +&\Bigl(\frac{a}{2}+\frac{b}{(\theta+1)^2}+\frac{c}{\theta+1}-\frac{a\beta}{\theta}\Bigr)n^{-1}+\bigo(n^{-2}\log^2(n)+n^{-\theta}).
\end{aligned}
\end{equation}
Substituting \eqref{prod inverse asymptotics} and \eqref{sum asymptotics} into \eqref{ytn} and taking limits, we observe the first term is order \(n^{\theta}\) and disappears for \(\theta<0\), so:
\begin{equation}\label{ypip}
    1=\lim_{n\to\infty} Y(t_{n+1})=Y(p)I(p)\Bigl(1-\frac{a}{\theta}\Bigr)
\end{equation}
and thus \begin{align*}
    1-Y(t_{n+1})&=-Y(p)I(p)\Bigl(\frac{b}{\theta+1}-\frac{\omega a C'(1)}{\theta}\Bigr)n^{-1}\log(n)\\
    &\quad-Y(p)I(p)\Bigl(\frac{a}{2}+\frac{b}{(\theta+1)^2}+\frac{c}{\theta+1}-\frac{a\beta}{\theta}\Bigr)n^{-1}+\bigo(n^{-2}\log^2(n)+n^{-\theta}).
\end{align*}
Now \(\frac{b}{\theta+1}-\frac{\omega aC'(1)}{\theta}=0\) is easily computed. Now \(Y'(t)\) is increasing in \(t\), and so \(\lim_{t\uparrow 1}Y'(t)=Y'(1)\in [0,\infty]\) exists, so it can be computed along the sequence \(\{t_n\}_{n\geq 0}\). Since \(t_{n+1}=1-\frac{2}{\sigma^2}n^{-1}+\bigo(n^{-2}\log(n))\), we have
\[Y'(1)=\lim_{n\to\infty}\frac{1-Y_{n+1}}{1-t_{n+1}}=-\frac{\sigma^2}{2}Y(p)I(p)\Bigl(\frac{a}{2}+\frac{b}{(\theta+1)^2}+\frac{c}{\theta+1}-\frac{a\beta}{\theta}\Bigr).\]
Using \eqref{ypip}, we also find \(Y(p)I(p)=\frac{\mu+\nu-\rho}{\nu-\rho}\). Corollary \ref{y functional eq} gives \(Y(0)=Y(p)I(p)\), and so as a bonus we get \(\pi(0)=\frac{\mu+\nu-\rho}{\nu-\rho}\), proving Theorem \ref{pi 0}.\footnote{This is not immediate. Theorem \ref{pi 0} is stated to hold under Assumption A, but above we have assumed B here. Assuming A only merely replaces the asymptotic \eqref{sum asymptotics} with \(-\frac{a}{\theta}+o(1)\), valid for any \(\theta<0\).} We spare the reader of the tedious computation involved in simplifying \(\frac{a}{2}+\frac{b}{(\theta+1)^2}+\frac{c}{\theta+1}-\frac{a\beta}{\theta}\). The constant \(C_p\) disappears, as it should or else this would strongly indicate a contradiction, since \(C_p\) a priori depends on the initial point \(t_0=p\), which was chosen for convenience only. The constant \(\omega\) vanishes too, meaning \(\bar{\pi}\) is independent of \(E[X^3]\). Using
\(C''(1)=\Var{I+J+K}+(\mu+\nu-\rho)(\sigma^2-1)+(\mu+\nu-\rho)^2-\sigma^2\nu\), we find
\[\frac{a}{2}+\frac{b}{(\theta+1)^2}+\frac{c}{\theta+1}-\frac{a\beta}{\theta}=\frac{\mu}{\sigma^2(\theta+1)}\left(\theta-\frac{2\mu}{\sigma^2}+\frac{4\nu}{\sigma^2\theta}+\frac{2\Var{I}}{\mu\sigma^2}-\frac{4\Var{I+J+K}}{\sigma^4\theta}\right)\]
and hence
\[\Bar{\pi}=Y'(1)=\frac{-\theta\mu}{4(\theta+1)(\nu-\rho)}\left(\theta\sigma^2-2\mu+\frac{4\nu}{\theta}+\frac{2\Var{I}}{\mu}-\frac{4\Var{I+J+K}}{\sigma^2\theta}\right).\] 
Finally, it remains to show that \(\Bar{\pi}=\infty\) for \(-1\leq \theta<0\). By the following proposition, it will be enough to establish this for \(\theta=-1\):
\begin{proposition}\label{y monotonicity}
    Let \(Y_n^{\rho}\) be the Markov chain \(Y_n\) with emigration component \(K_n^\rho\), a sequence of geometric random variables with mean \(\rho\). If \(\rho_1\leq \rho_2\) and \(Y_0^{\rho_1}=Y_0^{\rho_2}\), then there is a coupling such that \(Y_n^{\rho_2}\leq Y_n^{\rho_1}\) for all \(n\)
\end{proposition}
\begin{proof}
    If \(G_q,G_q'\sim\geo{q}\), \(G_p\sim\geo{p}\), and \(\xi\sim\ber{1-q/p}\) are all independent and \(q\leq p\), then \(G_q\overset{d}{=}G_p+\xi(1+G_q')\). So there is a coupling with \(K_n^{\rho_1}\leq K_n^{\rho_2}\) for all \(n\). Fixing all other random variables appearing in the definition of \(Y_n\), \eqref{def yn}, we see \(Y_n\) stochastically decreases as emigration increases, giving the claim.
\end{proof}
Now if \(\rho_1\) is such that \(-1<\theta_1<0\) one can increase emigration to the level \(\rho_2=\mu+\nu+\frac{\sigma^2}{2}\) while fixing \(I\) and \(J\). This latter process has the critical parameter \(\theta_2=-1\), and by the proposition is stochastically dominated by the process having parameter \(\theta_1\). To see that \(\Bar{\pi}=\infty\) when \(\theta=-1\),
by \eqref{theta=-1} and \eqref{prod inverse asymptotics},
\[\Pi_{n}^{-1}\sum_{k=0}^n(I^{-1}(t_{k+1})-1)\Pi_{k}=a+(a\omega C'(1)-c)n^{-1}\log(n)+\bigo(n^{-1}).\]
This constant simplifies as
\begin{align*}
    a\omega C'(1)-c&=\frac{\mu}{\sigma^2}+\frac{2\mu}{\sigma^4}-\frac{2\Var{I}}{\sigma^4}+\frac{4\mu\nu}{\sigma^4}-\frac{4\mu\Var{I+J+K}}{\sigma^6}\\
    &\leq \frac{\mu}{\sigma^2}\Bigl(1+\frac{2+4\nu}{\sigma^2}-\frac{4\Var{K}}{\sigma^4}\Bigr)=-\frac{4\mu}{\sigma^6}\Bigl((\mu+\nu)^2+\mu+\nu+2\sigma^2\Bigr)<0.
\end{align*}
Repeating the earlier argument, \(\frac{1-Y_{n+1}}{1-t_{n+1}}\) is now of the order \(\log(n)\), and so \(\Bar{\pi}=\infty\).
\end{proof}

\section{Speed and Escape Probability for HYC}
\begin{proof}[Proof of Theorems \ref{thm hyc escape} and \ref{thm hyc speed}]
Given Theorem \ref{thm stationary mean}, the argument is nothing novel, see \cite{BS08} for another example of it. Recall the well-known identity for transient, nearest-neighbor walks on \(\Z\): \[\lim_{n\to\infty}\frac{X_n}{n}=v \text{ a.s.}\iff\lim_{n\to\infty}\frac{T_n}{n}=\frac{1}{v}\text{ a.s.}\]
which holds for \(v=0\) as well if we set \(1/0=\infty\). Theorem \ref{pinsky speed} already implies the a.s. existence of these limits. When \(T_n<\infty\), we have \(T_n=n+2\sum_{k\leq n}\lj{k}{n}.\)
For \(p>2/3\), the walk is transient and so with probability 1 there exists a \(C\) such that \(\sum_{k\leq -1}\lj{k}{n}\leq C\), and so with probability 1
\[\frac{T_n}{n}=1+\frac{2}{n}\sum_{k=0}^n\lj{k}{n}+\bigo(n^{-1}).\] Now, by the argument in Section 2, \(\{\lj{n}{n},\ldots,\lj{0}{n}\}\overset{d}{=}\{V_0,\ldots,V_n\}\). The process \(V_n\), which is clearly irreducible and aperiodic, satisfies Assumption B with \(I=J=1\), \(K\sim\geo{1-p}\), and \(\sigma^2=2\). We calculate \(\theta=2-\rho=\frac{2-3p}{1-p}\). For \(p>2/3\), \(\theta<0\) and thus \(V_n\) is positive recurrent. Let \(\pi\) denote its stationary distribution. By Birkhoff's ergodic theorem, \(n^{-1}\sum_{k=0}^n V_k\rightarrow \Bar{\pi}\) a.s., and so \(v=v(p)=(1+2\Bar{\pi})^{-1}\). We have \(-1\leq \theta<0\) when \(2/3<p\leq 3/4\), and so by Theorem \ref{thm stationary mean}, \(\Bar{\pi}=\infty\) and \(v(p)=0\), which was already shown in \cite{Pin10} and is also contained in \cite{KP17}. If \(p>3/4\), after some simplifications Theorem \ref{thm stationary mean} yields
\[\Bar{\pi}=-\frac{2}{\theta+1}=\frac{2(1-p)}{4p-3},\]
and so for \(p> 3/4\), \(v(p)=4p-3\). Proposition \ref{y monotonicity} implies \(v(p)\) is nondecreasing in \(p\), and thus \(v(3/4)=0\). For \(p>2/3\) we also wish to compute the escape probability \(P_1(T_0=\infty)=P(T_{-1}=\infty)\). We claim that \(P(T_{-1}=\infty)=\pi(0)\) for \(p>2/3\). To see why, when \(T_n<\infty\), \(\{T_{-1}>T_n\}=\{\lj{0}{n}=0\}\). Now with probability 1, \(T_n<\infty\) for all \(n\geq 0\) and \(T_{n+1}>T_n\geq n\). Since \(V_n\) is aperiodic, \[P(T_{-1}=\infty)=\lim_{n\to\infty}P(T_{-1}>T_n)=\lim_{n\to\infty}P(\lj{0}{n}=0)=\lim_{n\to\infty}P(V_n=0)=\pi(0).\]
By Theorem \ref{pi 0}, \(\pi(0)=\frac{\mu+\nu-\rho}{\nu-\rho}\). Setting \(\mu=\nu=1\) gives \(P_1(T_0=\infty)=\frac{3p-2}{2p-1}\) for \(p>2/3\).
\end{proof}

\section{The \((p,q)\)-HYC random walk}
Consider the following modification to the HYC random walk: The walk starts at 0 and steps right from site \(x\) with probability \(p\in (1/2,1)\) until it steps left. After taking a left step, it steps left with probability \(1-q\) until it steps right again from \(x\), thereafter stepping left and right from \(x\) with equal probabilities. We will call this a \((p,q)\)-HYC walk. Much like the HYC walk, this can be viewed as a cookie random walk with a Markovian cookie stack, so using the formula in \cite{KP17}, one can determine when this walk is transient and ballistic. One calculates the relevant parameter of \(\delta=\frac{p+q-1}{(1-p)q}\). When \(\delta>2\), the walk is ballistic (\(\lim_{n\to\infty}X_n/n\) exists a.s. and is positive) by \cite[Theorem 1.7]{KP17}. Using the same notation as in Section 3 and borrowing the argument appearing there we find \(\{\lj{n}{n},\ldots,\lj{0}{n}\}\overset{d}{=}\{V_0',\ldots,V_n'\}\) with \[V_{n+1}'=\Bigl(1+G_{q,n}+\sum_{m=1}^{V_n'-K_n}X_{m,n}\Bigr)\ind{K_n<V_n'+1}\]
and \(G_{q,n}\overset{iid}{\sim} \geo{q},\ K_n\overset{iid}{\sim}\geo{1-p},\) and \(X_{m,n}\overset{iid}{\sim} \geo{1/2}\), all independent of each other. Suppose \(q\leq 1/2\) so that the walk becomes excited to the left after taking a left step. Recall the coupling \(G_q\overset{d}{=}G_{1/2}+\xi(1+G_q')\) when \(G_q,G_q'\sim\geo{q}\), \(G_{1/2}\sim \geo{1/2}\), and \(\xi\sim\ber{1-2q}\) are all independent and \(q\leq 1/2\). This means our above branching process is equivalent in distribution to the following:
\[V_{n+1}'=\Bigl(1+\xi_n(1+G_{q,n})+\sum_{m=1}^{V_n'-K_n+1}X_{m,n}\Bigr)\ind{K_n<V_n'+1}\]
Setting \(J_n=1+\xi_n(1+G_{q,n})\) and  \(I_n=1\), this process is irreducible, aperiodic, and satisfies Assumption B. Moreover, it only differs from the HYC case in its non-reproducing immigrants, \(J_n\). We calculate \(\theta=q^{-1}-\rho=-\delta+1\). When \(\delta>2\), which is the same as \(p>\frac{1+q}{1+2q}\), \(\theta<-1\) and so Theorem \ref{thm stationary mean} implies this process has a stationary distribution \(\pi'\) with mean
\[\overline{\pi'}=\frac{q(1-p)(2p-1)}{(p+q-1)(2pq+p-q-1)}\]
and so by the earlier argument the speed of this random walk is \((1+2\overline{\pi'})^{-1}\) when \(q\leq 1/2\) and \(p>\frac{1+q}{1+2q}\). Similarly, for \(\theta<0\) or \(p>(1+q)^{-1}\), the escape probability for this random walk is \[P(T_{-1}=\infty)=\frac{p(1+q)-1}{p+q-1}.\]
Additional random walks can be cooked up whose Markov chain \(\{\lj{n}{n},\ldots,\lj{0}{n}\}\) is equivalent in distribution to something having the form \eqref{def yn}. For example, one can apply another rule on top of the \((p,q)\)-HYC walk: The moment the walk steps left from location \(x\), a ``toll" of \(M\) additional left steps from \(x\) is required before right steps are permitted from \(x\) with probability \(q\). After taking a right step, the site \(x\) becomes neutral again as in the \((p,q)\)-HYC walk. This merely changes the above non-reproducing immigration distribution to \(J=1+M+\xi(1+G_q)\) and our theorems apply.

\section{Appendix}
\noindent \textit{The HYC walk is a cookie random walk with Markovian cookies.} We briefly illustrate how the HYC walk can be interpreted as a cookie random walk with a Markovian cookie stack, a model introduced in \cite{KP17}. A similar argument shows the \((p,q)\)-HYC walk (with or without an additional toll) also fits into this framework. Using the notation in \cite{KP17}, for the HYC walk there are cookies with strength \(\mathbf{p}=[1,0,1/2]^T\). The initial distribution on these cookies is \(\eta=(p,1-p,0)\), since the first time the walk reaches a site it steps right with probability \(p\) and left with probability \(1-p\). The cookie stack's transitions are governed by the matrix \[K=\begin{pmatrix}
    p&1-p&0\\
    0&0&1\\
    0&0&1
\end{pmatrix},\]
as if the walk steps right from \(x\), it steps right from it on the next visit with probability \(p\) and left with probability \(1-p\). After stepping left from \(x\), the walk steps left and right from \(x\) with equal probability on all future visits, and so the cookie at that site takes strength \(1/2\).
\begin{proof}[Proof of Proposition \ref{fn asymptotics}]
    The claim follows from the argument in \cite[Chapters 8.5, 8.6]{deBru58}. If a positive sequence \(\{u_n\}_{n\geq 1}\) with \(u_n\to 0\) satisfies \(u_{n+1}=u_n-au_n^2+bu_n^3+\bigo(u_n^4)\) for \(a>0\), de Bruijn's trick implies \(u_n=a^{-1}n^{-1}-(a^2-b)a^{-3}n^{-2}\log(n)+C_1n^{-2}+\bigo(n^{-3}\log^2(n))\), with \(C_1\) dependent on \(u_1\). To see this, let \(y_n=u_n^{-1}\). Since \(y_n^{-1}\to 0\), we have
    \[y_{n+1}=y_n\bigl(1-(ay_n^{-1}-by_n^{-2}+\bigo(y_n^{-3}))\bigr)^{-1}=y_n+a+(a^2-b)y_n^{-1}+\bigo(y_n^{-2})\]
    and so
    \(\frac{a}{2}\leq y_{n+1}-y_n\leq 2a\) for all sufficiently large \(n\), so
    \[y_n=y_1+a(n-1)+\sum_{k=1}^{n-1}(y_{k+1}-y_k-a)=an+\bigo(\log(n))\]
    Define \(r_n=y_n-an\), so that \(r_n=\bigo(\log(n))\). \(r_n\) satisfies
    \[r_{n+1}=r_n+(a^2-b)a^{-1}n^{-1}(1+a^{-1}n^{-1}r_n)^{-1}+\bigo(n^{-2})=r_n+(a^2-b)a^{-1}n^{-1}+\bigo(n^{-2}\log(n)).\]
    Letting \(w_n=r_n-(a^2-b)a^{-1}\log(n)\), we similarly deduce
    \[w_{n+1}=w_n+\bigo(n^{-2}\log(n)).\]
    and hence \(w_n\to W=w_1+\sum_{k=1}^\infty w_{k+1}-w_k\) and \(W-w_n=\sum_{k\geq n}w_{k+1}-w_k=\bigo(n^{-1}\log(n))\). Finally
    \[y_n=an+(a^2-b)a^{-1}\log(n)+W+\bigo(n^{-1}\log(n))\]
    Inverting both sides of this equation gives the claim.
    Now \(F_{n,0}\) represents the probability that a critical Galton-Watson process with offspring distribution \(X\) and one initial member has died out by time \(n\), so this tends to 1 as \(n\to\infty\) and thus so will \(F_{n,t}\) for \(0\leq t\leq 1\). Now if \(E[X^4]<\infty\), \(F(x)=x+\frac{\sigma^2}{2}(1-x)^2-\frac{\alpha-3\sigma^2-1}{6}(1-x)^3+\bigo(1-x)^4\) with \(\alpha=E[X^3]\), and so letting \(u_n=1-F_{n,t}\), we have \(u_{n+1}=u_n-\frac{\sigma^2}{2}u_n^2+\frac{\alpha-3\sigma^2-1}{6}u_n^3+\bigo(u_n^4)\). If only \(E[X^3]<\infty\) is assumed, \(F(x)=x+\frac{\sigma^2}{2}(1-x)^2+\bigo(1-x)^3\), and a similar argument to the one above yields \(F_{n,t}=1-\frac{2}{\sigma^2}n^{-1}+\bigo(n^{-2}\log(n))\).
\end{proof}
\begin{proof}[Proof of Proposition \ref{monotone density}]
The proof is a minor adjustment to the proof of the monotone density theorem. We may assume \(a_k\) is ultimately decreasing, or else this follows directly from that theorem. Fix \(\lambda>1\). Let \(n\) be large enough so that \(a_k\) is decreasing and positive for all \(k\geq n/2\). Observe that if \(S_n=\sum_{k=1}^nka_k\), then
\[S_{\fl{\lambda n}}-S_n=\sum_{k={n+1}}^{\fl{\lambda n}}ka_k\leq (\lambda-1)n\Bigl(\max_{n+1\leq k\leq \fl{\lambda n}}ka_k\Bigr)\leq (\lambda-1)\lambda n^2a_{n},\]
including when \(\fl{\lambda n}=n\). Now we have \[\lambda(\lambda-1)n^{2-p}a_n\geq \frac{S_{\fl{\lambda n}}}{(\fl{\lambda n})^p}\frac{(\fl{\lambda n})^p}{n^p}-\frac{S_n}{n^p}.\]
Taking limits and using the assumption gives
\[\liminf_{n\to\infty}n^{2-p}a_n\geq \frac{\lambda^p-1}{\lambda(\lambda-1)}.\]
Sending \(\lambda \to 1\) yields \(\liminf_{n\to\infty}n^{2-p}a_n\geq p\). The upper bound is obtained similarly by taking \(1/2<\lambda<1\) and considering \(S_n-S_{\fl{\lambda n}}\).
\end{proof}

\section*{Acknowledgments}
The author would like to thank Jonathon Peterson for all his helpful feedback and pointing out the connection between the HYC walk and excited random walks with Markovian cookie stacks.

\bibliographystyle{amsplain}
\bibliography{citations.bib}

@article {KP17,
    AUTHOR = {Kosygina, Elena and Peterson, Jonathon},
     TITLE = {Excited random walks with {M}arkovian cookie stacks},
   JOURNAL = {Ann. Inst. Henri Poincar\'e{} Probab. Stat.},
  FJOURNAL = {Annales de l'Institut Henri Poincar\'e{} Probabilit\'es et
              Statistiques},
    VOLUME = {53},
      YEAR = {2017},
    NUMBER = {3},
     PAGES = {1458--1497},
      ISSN = {0246-0203,1778-7017},
   MRCLASS = {60K37 (60F05 60J10 60K35)},
  MRNUMBER = {3689974},
MRREVIEWER = {Bruno\ Schapira},
       DOI = {10.1214/16-AIHP761},
       URL = {https://doi.org/10.1214/16-AIHP761},
}

@article {Pin10,
    AUTHOR = {Pinsky, Ross G.},
     TITLE = {Transience/recurrence and the speed of a one-dimensional
              random walk in a ``have your cookie and eat it'' environment},
   JOURNAL = {Ann. Inst. Henri Poincar\'e{} Probab. Stat.},
  FJOURNAL = {Annales de l'Institut Henri Poincar\'e{} Probabilit\'es et
              Statistiques},
    VOLUME = {46},
      YEAR = {2010},
    NUMBER = {4},
     PAGES = {949--964},
      ISSN = {0246-0203,1778-7017},
   MRCLASS = {60K35 (60G50 60K37)},
  MRNUMBER = {2744879},
MRREVIEWER = {Quansheng\ Liu},
       DOI = {10.1214/09-AIHP331},
       URL = {https://doi.org/10.1214/09-AIHP331},
}

@book {Fel71,
    AUTHOR = {Feller, William},
     TITLE = {An introduction to probability theory and its applications.
              {V}ol. {II}},
   EDITION = {Second},
 PUBLISHER = {John Wiley \& Sons, Inc., New York-London-Sydney},
      YEAR = {1971},
     PAGES = {xxiv+669},
   MRCLASS = {60.00},
  MRNUMBER = {270403},
}

@incollection {YY95,
    AUTHOR = {Yanev, George P. and Yanev, Nickolay M.},
     TITLE = {Critical branching processes with random migration},
 BOOKTITLE = {Branching processes ({V}arna, 1993)},
    SERIES = {Lect. Notes Stat.},
    VOLUME = {99},
     PAGES = {36--46},
 PUBLISHER = {Springer, New York},
      YEAR = {1995},
      ISBN = {0-387-97989-1},
   MRCLASS = {60J80},
  MRNUMBER = {1351259},
       DOI = {10.1007/978-1-4612-2558-4\_5},
       URL = {https://doi.org/10.1007/978-1-4612-2558-4_5},
}

@article {KKS75,
    AUTHOR = {Kesten, H. and Kozlov, M. V. and Spitzer, F.},
     TITLE = {A limit law for random walk in a random environment},
   JOURNAL = {Compositio Math.},
  FJOURNAL = {Compositio Mathematica},
    VOLUME = {30},
      YEAR = {1975},
     PAGES = {145--168},
      ISSN = {0010-437X,1570-5846},
   MRCLASS = {60J15 (60F05 60J80)},
  MRNUMBER = {380998},
MRREVIEWER = {K.\ B.\ Erickson},
}

@article{Let21,
author = "Zachary A Letterhos",
title = "{Self-Interacting Random Walks and Related Braching-Like Processes}",
year = "2021",
month = "7",
url = "https://hammer.purdue.edu/articles/thesis/Self-Interacting_Random_Walks_and_Related_Braching-Like_Processes/15078462",
doi = "10.25394/PGS.15078462.v1"
}

@article {BS08,
    AUTHOR = {Basdevant, Anne-Laure and Singh, Arvind},
     TITLE = {On the speed of a cookie random walk},
   JOURNAL = {Probab. Theory Related Fields},
  FJOURNAL = {Probability Theory and Related Fields},
    VOLUME = {141},
      YEAR = {2008},
    NUMBER = {3-4},
     PAGES = {625--645},
      ISSN = {0178-8051,1432-2064},
   MRCLASS = {60K35 (60F15 60J80)},
  MRNUMBER = {2391167},
MRREVIEWER = {David\ A.\ Croydon},
       DOI = {10.1007/s00440-007-0096-8},
       URL = {https://doi.org/10.1007/s00440-007-0096-8},
}

@article{BS08-2,
author = {Anne-Laure Basdevant and Arvind Singh},
title = {{Rate of growth of a transient cookie random walk}},
volume = {13},
journal = {Electronic Journal of Probability},
number = {none},
publisher = {Institute of Mathematical Statistics and Bernoulli Society},
pages = {811 -- 851},
year = {2008},
doi = {10.1214/EJP.v13-498},
URL = {https://doi.org/10.1214/EJP.v13-498}
}

@article {BW03,
    AUTHOR = {Benjamini, Itai and Wilson, David B.},
     TITLE = {Excited random walk},
   JOURNAL = {Electron. Comm. Probab.},
  FJOURNAL = {Electronic Communications in Probability},
    VOLUME = {8},
      YEAR = {2003},
     PAGES = {86--92},
      ISSN = {1083-589X},
   MRCLASS = {60G50 (60K37)},
  MRNUMBER = {1987097},
MRREVIEWER = {Michael\ Voit},
       DOI = {10.1214/ECP.v8-1072},
       URL = {https://doi.org/10.1214/ECP.v8-1072},
}

@book{deBru58,
  title     = {Asymptotic Methods in Analysis},
  author    = {de Bruijn, Nicolaas Govert},
  year      = {1958},
  publisher = {North-Holland Publishing Company},
  address   = {Amsterdam}
}

@article{KMP21,
  title={Convergence of random walks with Markovian cookie stacks to Brownian motion perturbed at extrema},
  author={Kosygina, Elena and Mountford, Thomas and Peterson, Jonathon},
  journal={Annals of Probability},
  volume={49},
  number={6},
  pages={3120--3179},
  year={2021},
  publisher={Institute of Mathematical Statistics},
  url={https://arxiv.org/abs/2008.06766}
}

@book{GdPY18,
  title     = {Controlled Branching Processes},
  author    = {Gonz{\'a}lez, Miguel and del Puerto, In{\'e}s M. and Yanev, George P.},
  year      = {2018},
  publisher = {Wiley-ISTE},
  isbn      = {978-1786302533},
  pages     = {232}
}

@book{AN04,
  title={Branching Processes},
  author={Athreya, Krishna B. and Ney, Peter E.},
  year={2004},
  publisher={Dover Publications},
  address={Mineola, NY},
  isbn={978-0486435308}
}

@article{GMdP05,
 ISSN = {00219002},
 URL = {http://www.jstor.org/stable/30040804},
 author = {M. González and M. Molina and I. Del Puerto},
 journal = {Journal of Applied Probability},
 number = {2},
 pages = {463--477},
 publisher = {Applied Probability Trust},
 title = {Asymptotic Behaviour of Critical Controlled Branching Processes with Random Control Functions},
 urldate = {2026-08-22},
 volume = {42},
 year = {2005}
}

@article{BGMCdP24,
  author    = {Barczy, Matyas and Gonz{\'a}lez, Miguel and Mart{\'i}n-Ch{\'a}vez, Pedro and del Puerto, In{\'e}s},
  title     = {Diffusion approximation of critical controlled multi-type branching processes},
  journal   = {Revista de la Real Academia de Ciencias Exactas, F{\'i}sicas y Naturales. Serie A. Matem{\'a}ticas (RACSAM)},
  volume    = {118},
  number    = {3},
  articleno = {117},
  year      = {2024},
  doi       = {10.1007/s13398-024-01593-0},
  url       = {https://link.springer.com/article/10.1007/s13398-024-01593-0}
}

\end{document}